\numberwithin{equation}{section}
\numberwithin{figure}{section}
\theoremstyle{plain}
\newtheorem{thm}{\protect\theoremname}[section]
  \theoremstyle{definition}
  \newtheorem{defn}[thm]{\protect\definitionname}
  \theoremstyle{definition}
  \newtheorem{example}[thm]{\protect\examplename}
  \theoremstyle{plain}
  \theoremstyle{plain}
  \theoremstyle{remark}
  \theoremstyle{plain}
\newtheorem{theorem}{Theorem}[section]
\theoremstyle{plain}
\newtheorem{corollary}[theorem]{Corollary}
\newtheorem{definition}[theorem]{Definition}
\newtheorem{lemma}[theorem]{Lemma}
\newtheorem{proposition}[theorem]{Proposition}
\newtheorem{remark}[theorem]{Remark}
\newcommand{\eqdef}{\stackrel{\mathrm{def}}{=}}
\newcommand{\N}{\mathbb{N}}
\newcommand{\R}{\mathbb{R}}
\newcommand{\C}{\mathbb{C}}
\newcommand{\supp}{\mbox{supp }}
  \providecommand{\corollaryname}{Corollary}
  \providecommand{\definitionname}{Definition}
  \providecommand{\examplename}{Example}
  \providecommand{\lemmaname}{Lemma}
  \providecommand{\propositionname}{Proposition}
  \providecommand{\remarkname}{Remark}
\providecommand{\theoremname}{Theorem}
\begin{document}

\title{On compact subsets of  Sobolev spaces on manifolds 
}

\author{Leszek Skrzypczak}

\thanks{One of the authors (L.S.) was supported by National Science Center,
Poland, Grant no. 2013/10/A/ST1/00091.}

\address{Faculty of Mathematics \& Computer Science, Adam Mickiewicz University,
ul. Uniwersytetu Pozna\'{n}skiego 4, 61-614 Pozna\'{n}, Poland}

\email{lskrzyp@amu.edu.pl}

\author{Cyril Tintarev}

\thanks{The second author (C.T.) expresses his gratitude to Leszek Skrzypczak
and the Faculty of Mathematics \& Computer Science of Adam Mickiewicz
University, as well as to Simeon Reich, Yehuda Pinchover and the Faculty of Mathematics at Technion, for their kind hospitality. The latter stay was as a Lady Davis Visiting Professor.}

\address{Technion -- Israel Institute of Technology, Haifa 32000, Israel}

\email{tammouz@gmail.com}

\maketitle

\section{Introduction}
It is common that a Sobolev space defined on $\R^m$ has a non-compact embedding into an $L^p$-space, but it has subspaces for which this embedding becomes compact. There are three well known cases of such subspaces, the  Rellich compactness, for a subspace of functions on a bounded domain (or an unbounded domain, sufficiently thin at infinity),  the Strauss compactness, for a subspace of radially symmetric functions in $\R^m$, cf. \cite{Strauss}, and the weighted Sobolev spaces. Known generalizations of Strauss compactness include subspaces of functions with block-radial symmetry \cite{Lions}, subspaces of functions with certain symmetries on Riemannian manifolds, as well as similar subspaces of more general Besov and Triebel-Lizorkin spaces (see \cite{Skr, Skr2, Skrti1}) . In \cite{HebeyVaugon} presence of symmetries is interpreted in terms of the rising critical Sobolev exponent corresponding to the smaller effective dimension of the quotient space. 
In \cite{Skrti1} a necessary and sufficient condition on the group $G$ of isometries of a Riemannian manifold  is provided for compactness of Sobolev embeddings of a subspace of $G$-symmetric functions, but only for the case when the manifold is a homogeneous space. The objective of this paper is to extend this result to general manifolds that admit Sobolev embeddings,  as well as to  study compactness that results from conditions of quasi-symmetric type rather than from symmetries. In particular we study compactness of embedding of subspaces defined by restriction of the number of independent variables, i.e. subspaces of functions of the form $f\circ \varphi$ with a fixed $\varphi$. 

The method of the proof is based on the  property of cocompactness type for non-compact Sobolev embeddings, Lemma~\ref{lem:spotlight} (the "spotlight lemma"). We then verify that suitable symmetry conditions imply conditions of Lemma~\ref{lem:spotlight}, by the following heuristic argument: if the embedding is not compact on a particular sequence, then by the spotlight lemma there is a sequence of balls on the manifold where the sequence does not locally vanish in $L^1$, but thanks to the symmetry condition on the functions, this non-vanishing may extend to too many balls, providing a contradiction.    

In Section~2 we formulate the spotlight lemma for a general class of manifolds that admit Sobolev embeddings, and define orbital discretizations for Riemannian manifolds as well as functions quasisymmetric relative to an orbital discretization. In Section~3 we prove compactness for subspaces of functions that are quasisymmetric with respect to an abstract orbital discretization, in Theorem~\ref{t:main}. From this theorem we derive in Section~4, Theorem~\ref{thm:groupsymm}, a compactness condition for subspaces defined by a group symmetry, and show that it is also necessary. In Section~5 we study subspaces defined by reduction of variables, and give two sufficient conditions for compactness of such subspaces,  Theorem~\ref{thm:levels1} and Theorem~\ref{thm:levels2}. The compactness condition in the latter, formulated for a class of functions with more regular level sets than the former, is similar to that of  Theorem~\ref{thm:groupsymm} and is also necessary. In Section~6 we study compactness of subsets of Sobolev spaces extended by order. Its main results are  Theorem~\ref{thm:extord} and Corollary~\ref{cor:extorder}. In Section~7 we give existence results to two sample variational problems as an illustration of consequences of compact embeddings for subspaces (obviously, this compactness can be employed in a wide range of minimax problems for quasilinear elliptic PDE). 


\section{Preliminaries: discretization of a manifold and a ``spotlight'' lemma}
Let $M$ be
an $m$-dimensional, $m\ge 2$, non-compact, complete and connected Riemannian manifold. 
In what follows $B(x,r)$ will denote a geodesic ball in $M$ and
$\Omega_{r}$ will denote the ball in $\R^{m}$ of radius $r$ centered at the origin. For every $x\in M$ there exists a maximal $r(x)\in (0,\infty]$, called injectivity radius at point $x$, such that the Riemannian exponential map $\mathrm{exp}_{x}$ is a diffeomorphism of $\{v\in T_xM:\, |v|_x\eqdef \sqrt{g_x(v,v)}< r(x)\}$ onto $B(x,r(x))$. For each $x\in M$ we choose an orthonormal basis for $T_xM$ which yields an identification $i_x:\R^m \rightarrow T_xM$.  Then  $e_{x}:\Omega_{r}\to B(x,r(x))$ will denote  a geodesic normal coordinates at $x$ given by $e_{x}=\mathrm{exp}_{x}\circ i_x$. We do not require smoothness of the map $i_x$ with respect to $x$, since the arguments $x$ will be taken from a discrete subset of $M$.  We recall that $r(M)=\inf\{r(x): x\in M\}$ is called an injectivity radius of the manifold $M$. If $M$ is compact, $r(M)$ is always strictly positive, but it is not necessary so for non-compact manifolds. Since we  assume that $M$ is connected, the distance  $d_M(x,y)$ between any two points $x$ and $y$ on $M$ is well defined.

For $k$ integer, and $f:M\rightarrow \C$ we denote by $\nabla^k f$ the $k^{\text{th}}$ covariant derivative of $u$,  and by $|\nabla^k f|$ the norm of $\nabla^k f$ defined by a local chart by
\[
|\nabla^k f|^2 = g^{i_1j_1}\cdots g^{i_kj_k}\partial_{i_1}\ldots \partial_{i_k}  f\partial_{j_1}\ldots \partial_{j_k}  \overline{f}\, .
\] 
In what follows we assume the following conditions.
\begin{enumerate}
	\item[(M1)] The Ricci curvature of $M$ is bounded from below.
	\item[(M2)] $\inf_{x\in M}\mathrm{vol}B(x,1)>0$,
\end{enumerate}
\begin{remark}\label{rem:anyball}
	If (M1) holds, then it follows from Bishop-Gromov theorem (see \cite[Theorem 1.1]{Hebey}) that for any $0<r<R$ there is a $C(r,R)>0$ such that 
	\begin{equation}
	\label{eq:vol}
		\mathrm{vol}(B(x,R))\le C(r,R) \mathrm{vol}(B(y,r))
		\mbox{ for any } x\in M, y\in B(x,R).
	\end{equation}
	
	If (M1) and (M2) hold, then one has 
	\begin{equation}
		 \inf_{x\in M}\mathrm{vol}B(x,r)>0
	\end{equation}
	for any $r>0$.
\end{remark}
The Sobolev space $H^{1,p}(M)$, $p\in[1,\infty)$, is a completion of $C^{\infty}_o(M)$ with respect to the norm  
\[
\|u\|_{H^{1,p}}^p=  \int_M |\nabla u|^p d\mathrm{vol} + \int_M |u|^pd\mathrm{vol}.
\]
Let $p^*$ denote the  Sobolev conjugate of $p$, $1\le p < m$ i.e. $\frac{1}{p^*}=\frac{1}{p}- \frac{1}{m}$.

Since $M$ satisfies (M1) and (M2), the space  $H^{1,p}(M)$ is continuously embedded into $L^q(M)$ for every  $p\in(1,m)$ and $q\in[p,p^*]$ and the constant in Sobolev embeddings over balls $B(x,r)$ is independent of $x\in M$
(see \cite[Theorem 3.2 and Theorem 3.1]{Hebey} based on \cite{Buser}).

\begin{defn}
\label{def:discr}  A subset $\Gamma$ of  Riemannian manifold $M$ is called 
an $(\varepsilon,\nu)$-discretization of $M$, $\varepsilon>0$, $\nu\in \N$,  if the distance between any two distinct points of $\Gamma$ is greater than or equal to $\varepsilon$ and 
\[ M = \bigcup_{y\in \Gamma} B(y,\nu\varepsilon).\]
\end{defn}
Any Riemannian manifold $M$ has an $(\varepsilon,\nu)$-discretization for any $\varepsilon>0$ and $\nu \ge 1$. If $M$ satisfies (M1), then the covering  
$\{ B(y,r)\}_{y\in \Gamma}$ is uniformly locally finite for any $r\ge \nu\varepsilon$, cf. \cite[Lemma 1.1]{Hebey} and  \cite{GS}, \cite{Shubin}, \cite{LS} where the same concepts are considered with stronger assumptions about geometry.  

\begin{lemma}\label{lem:intheball}
Let $M$ satisfy (M1) and let $\Gamma$ be a $(\varepsilon,\nu)$-discretization of $M$. Then for any $R>0$ there exists $n_R\in\N$, such that  $\#(\Gamma\cap B(x,R))\le n_R$ for every $x\in M$.
\end{lemma}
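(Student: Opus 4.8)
The plan is to use a standard volume-packing argument, with the one subtlety that the uniformity in $x$ must come from the \emph{relative} volume comparison \eqref{eq:vol} rather than from an absolute lower bound on ball volumes, which is unavailable under (M1) alone.

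First I would exploit the separation property of $\Gamma$: since any two distinct points of $\Gamma$ are at distance at least $\varepsilon$, the triangle inequality shows that the geodesic balls $\{B(y,\varepsilon/2)\}_{y\in\Gamma}$ are pairwise disjoint. Next, for $y\in\Gamma\cap B(x,R)$ one has $B(y,\varepsilon/2)\subset B(x,R+\varepsilon/2)$, so the disjoint balls $B(y,\varepsilon/2)$ indexed by $y\in\Gamma\cap B(x,R)$ all sit inside $B(x,R+\varepsilon/2)$. Additivity of the Riemannian volume over any finite subfamily $F\subseteq \Gamma\cap B(x,R)$ then gives
\[
\sum_{y\in F} \mathrm{vol}(B(y,\varepsilon/2)) \le \mathrm{vol}(B(x,R+\varepsilon/2)),
\]
where the right-hand side is finite because (M1) together with the Bishop--Gromov theorem bounds the volume of any geodesic ball of fixed radius.

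The crucial step is to bound each summand from below uniformly in $x$ and $y$. Applying \eqref{eq:vol} of Remark~\ref{rem:anyball} with the larger radius $R+\varepsilon/2$ and the smaller radius $\varepsilon/2$ (note that $\varepsilon/2< R+\varepsilon/2$ since $R>0$), and observing that every such $y$ lies in $B(x,R+\varepsilon/2)$, one obtains a constant $C=C(\varepsilon/2,R+\varepsilon/2)$, independent of $x$ and $y$, with
\[
\mathrm{vol}(B(x,R+\varepsilon/2)) \le C\,\mathrm{vol}(B(y,\varepsilon/2)).
\]
Substituting this lower bound into the sum, the positive and finite factor $\mathrm{vol}(B(x,R+\varepsilon/2))$ cancels and yields $\#F \le C$ for every finite $F$, so that $\Gamma\cap B(x,R)$ is finite with $\#(\Gamma\cap B(x,R))\le C$. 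Since the cardinality is an integer, setting $n_R=\lceil C(\varepsilon/2,R+\varepsilon/2)\rceil$ completes the argument, and $n_R$ manifestly does not depend on $x$.

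The only point needing care is precisely this uniformity in $x$: a naive absolute lower bound on $\mathrm{vol}(B(y,\varepsilon/2))$ would require the noncollapsing condition (M2), which is not assumed in the lemma. The main obstacle is therefore avoiding such a bound, and it is resolved by invoking the relative comparison \eqref{eq:vol}, so that the common ball volume cancels. I would also verify that $\mathrm{vol}(B(x,R+\varepsilon/2))$ is genuinely positive and finite, which again follows from (M1) via Bishop--Gromov, legitimizing the cancellation.
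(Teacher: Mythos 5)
Your proof is correct and is essentially the paper's own argument, written out in full: the paper's one-line proof is exactly this packing argument (disjoint balls $B(y,\varepsilon/2)$ inside $B(x,R+\varepsilon)$, finiteness via the Bishop--Gromov comparison \eqref{eq:vol}). Your elaboration of the key point --- that uniformity in $x$ comes from the \emph{relative} volume comparison so that the ball volume cancels, rather than from the noncollapsing condition (M2) --- is precisely what makes the paper's cited use of \eqref{eq:vol} work.
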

\begin{proof}
By definition,  $\#(\Gamma\cap B(x,R))$ cannot exceed the maximal number of disjoint balls of radius $\varepsilon/2$ contained in $B(x,R+\varepsilon)$, which is finite by \eqref{eq:vol}. 
\end{proof}

\begin{lemma}["Spotlight lemma"]
	\label{lem:spotlight}Let $M$ be an $m$-dimensional,  non-compact, complete  Riemannian manifold satisfying (M1) - (M2),
	and let $\Gamma\subset M$ be a $(\varepsilon,\nu)$-discretization of $M$, 
	$\varepsilon,\nu>0$.
	Let  $(u_{k})$ be a bounded sequence in $H^{1,p}(M)$, $1< p< m$.  Then, $u_{k} \to 0$ in $L^{q}(M)$ for any $ q\in(p,p^{*})$ if and only if  
	\begin{equation}
	\label{eq:spotlight}
	\int_{B(y_k,\nu\varepsilon)}|u_{k}|d\mathrm{vol}\to 0 \mbox { for any sequence }(y_k),\quad y_{k}\in \Gamma.
	\end{equation}
\end{lemma}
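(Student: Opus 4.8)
The plan is to first recast condition \eqref{eq:spotlight} in the equivalent quantitative form $M_k:=\sup_{y\in\Gamma}\int_{B(y,\nu\varepsilon)}|u_k|\,d\mathrm{vol}\to 0$: if this supremum did not tend to $0$, one could pass to a subsequence on which it stays above some $\delta>0$, pick $y_k\in\Gamma$ realizing at least $\delta/2$ of it, and fill in the remaining indices arbitrarily, contradicting \eqref{eq:spotlight}; the reverse implication is immediate. The forward (``only if'') direction is then the easy one. By Hölder's inequality, for one fixed $q\in(p,p^{*})$, $\int_{B(y_k,\nu\varepsilon)}|u_k|\,d\mathrm{vol}\le \mathrm{vol}(B(y_k,\nu\varepsilon))^{1-1/q}\,\|u_k\|_{L^{q}(M)}$, and the ball volumes $\mathrm{vol}(B(y_k,\nu\varepsilon))$ are bounded uniformly in $k$ by the Bishop--Gromov comparison under (M1) (cf.\ Remark~\ref{rem:anyball}); hence $u_k\to0$ in $L^{q}(M)$ forces the local $L^{1}$ masses to vanish.

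For the converse I assume $M_k\to0$ together with $\|u_k\|_{H^{1,p}(M)}\le C$, which through the global embedding also yields $\|u_k\|_{L^{p^{*}}(M)}\le C'$. The first step is to upgrade the hypothesis from $L^{1}$ to $L^{p}$ mass: interpolating $L^{p}$ between $L^{1}$ and $L^{p^{*}}$ on each ball $B=B(y,\nu\varepsilon)$ and taking the supremum over $y\in\Gamma$, the uniformly bounded local $L^{p^{*}}$ mass (dominated by $\|u_k\|_{L^{p^{*}}(M)}^{p^{*}}$) lets me conclude that $\widetilde M_k:=\sup_{y\in\Gamma}\int_{B(y,\nu\varepsilon)}|u_k|^{p}\,d\mathrm{vol}\to0$ as well.

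The decisive reduction is that it suffices to prove $u_k\to0$ in $L^{q_0}(M)$ for a \emph{single} exponent $q_0\in(p,p^{*})$: once this is known, the full range $q\in(p,p^{*})$ follows by interpolating $L^{q}$ between $L^{p}$ and $L^{q_0}$ (for $q<q_0$) and between $L^{q_0}$ and $L^{p^{*}}$ (for $q>q_0$), using that $(u_k)$ is bounded in both $L^{p}(M)$ and $L^{p^{*}}(M)$. I therefore fix $q_0$ close to $p$. On a single ball $B$, interpolating $L^{q_0}$ between $L^{p}$ and $L^{p^{*}}$ and applying the Sobolev inequality over $B$ with constant independent of $y$ (the uniform local embedding recorded in Section~2) gives, with $\mu$ the interpolation exponent, $\int_{B}|u_k|^{q_0}\le C\big(\int_{B}|u_k|^{p}\big)^{(1-\mu)q_0/p}\,\|u_k\|_{H^{1,p}(B)}^{\mu q_0}$. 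Summing over $\Gamma$, whose covering has uniformly bounded overlap by \lemref{lem:intheball}, I apply Hölder's inequality for sums so that the Sobolev factor appears with the summable power $p$ (possible precisely because $q_0$ near $p$ makes $\mu q_0\le p$), while the remaining local $L^{p}$ factor carries a power strictly larger than $p$. This lets me extract $\widetilde M_k$ to a strictly positive surplus power and bound the rest by $\sum_{y}\|u_k\|_{H^{1,p}(B)}^{p}\lesssim\|u_k\|_{H^{1,p}(M)}^{p}$ and $\sum_{y}\int_{B}|u_k|^{p}\lesssim\|u_k\|_{L^{p}(M)}^{p}$, both bounded; since $\widetilde M_k\to0$, I obtain $\int_{M}|u_k|^{q_0}\to0$.

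The main obstacle is this summation step: a naive interpolate-and-sum only reproduces the uniform boundedness of $\|u_k\|_{L^{q}(M)}$ rather than its decay, because a balanced Hölder splitting leaves every summand factor with a power-one sum. The decay must be injected by extracting the small quantity $\widetilde M_k$ with a strictly positive surplus power, and the bookkeeping closes only when the companion factor is summed with its natural power $p$ coming from the gradient bound; arranging the exponents so that both are simultaneously possible is exactly why I first reduce to a single exponent $q_0$ near $p$, where $\mu q_0\le p$. Once this is arranged, the uniformly bounded overlap of the discretization and the uniform Sobolev constant on balls --- both consequences of (M1)--(M2) --- make all the summed quantities uniformly finite.
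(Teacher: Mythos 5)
Your proof is correct: the two preliminary reductions (upgrading the hypothesis to local $L^p$ smallness via $L^1$--$L^{p^*}$ interpolation, and reducing to a single exponent $q_0$ near $p$) are sound, and the exponent bookkeeping in your summation step does close --- with $r'=p/(\mu q_0)$ and $r=p/(p-\mu q_0)$, your surplus condition $(1-\mu)q_0 r>p$ is equivalent to $q_0>p$, so a positive power of $\widetilde M_k$ is indeed extracted. However, the paper's argument is a more direct single pass through the same ingredients (uniform Sobolev constant on balls, bounded overlap of the covering, H\"older interpolation against the bounded $L^{p^*}$ norm). The paper works at the exponent $q$ itself: it writes the local Sobolev inequality in the multiplicative form
\[
\int_{B}|u_k|^q\,d\mathrm{vol}\le C\int_{B}\left(|\nabla u_k|^p+|u_k|^p\right)d\mathrm{vol}\,\left(\int_{B}|u_k|^q\,d\mathrm{vol}\right)^{1-p/q},
\]
sums over $\Gamma$ while pulling $\sup_{y\in\Gamma}\bigl(\int_{B(y,\nu\varepsilon)}|u_k|^q\,d\mathrm{vol}\bigr)^{1-p/q}$ out of the sum --- so only the $H^{1,p}$ terms are summed, with their natural power $p$ --- and then kills this sup by one H\"older interpolation of the $L^q$ ball mass between $L^1$ (the hypothesis \eqref{eq:spotlight}) and $L^{p^*}$ (boundedness), evaluated on a near-maximizing ball $B(y_k,\nu\varepsilon)$. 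Because that surplus factor is born with the positive exponent $1-p/q$ for every $q\in(p,p^*)$, the paper needs no reduction to a special $q_0$, no upgrade of the hypothesis from $L^1$ to $L^p$, and no H\"older-for-sums with tuned exponents: the sup-out-of-the-sum trick replaces your finite-exponent H\"older entirely. What your route buys is modularity --- each of your steps is a standard, reusable reduction; what the paper's route buys is brevity and the whole range $q\in(p,p^*)$ in one stroke, precisely because the surplus is placed on the $L^q$ ball mass rather than on the $L^p$ ball mass.
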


\begin{proof}  
	
	Necessity in the lemma is  trivial. 
	
	Let us prove sufficiency. Assume condition \eqref{eq:spotlight}.   The  Sobolev inequalities on Riemannian balls, cf. eg. \cite{MSC95},  implies   that there  exists a positive constant 
	$C>0$ independent of $y\in M$ such that 
	\[
	\int_{B(y,\nu\varepsilon)}|u_{k}|^{q}d\mathrm{vol}\le C\int_{B(y,\nu\varepsilon)}(|\nabla u_{k}|^{p}+|u_{k}|^{p})d\mathrm{vol}\left(\int_{B(y,\nu\varepsilon)}|u_{k}|^{q}d\mathrm{vol}\right)^{1-p/q}.
	\]
	Adding the terms in  the left and  the right hand side over $y\in \Gamma$ 
	and taking into account the uniform multiplicity of the covering (a consequence of (M1)),
	we have 
	\begin{equation}
	\int_{M}|u_{k}|^{q}d\mathrm{vol}\le C\int_{M}(|\nabla u_{k}|^{p}+|u_{k}|^{p})d\mathrm{vol}\; \sup_{y\in\Gamma}\left(\int_{B(y,\nu\varepsilon)}|u_{n}|^{q}d\mathrm{vol}\right)^{1-p/q}.\label{eq:intro2}
	\end{equation}
	Boundedness of the sequence  $(u_{k})$  in $H^{1,p}(M)$ implies that the supremum of the right hand side is finite. So for any $u_{k}$, $k\in\N$, we can find a 
	$y_{k}\in \Gamma$, 
	such that 
	\begin{equation}
	\label{eq:intro1}
	\sup_{y\in \Gamma}\int_{B(y,\nu\varepsilon)}|u_{k}|^{q}d\mathrm{vol} \le 2\int_{B(y_{k},\nu\varepsilon)}|u_{k}|^{q}d\mathrm{vol}.
	\end{equation}
	Applying the H\"older inequality to the right hand side, we have
	\begin{equation}
	\label{eq:introX}
		\int_{B(y_{k},\nu\varepsilon)}|u_{k}|^{q}d\mathrm{vol}\le 
	\|u_k\|_{p^*}^\frac{p^*(q-1)}{p^*-1}	\left( \int_{B(y_{k},\nu\varepsilon)}|u_{k}|d\mathrm{vol}
		\right)^\frac{p^*-q}{p^*-1},
	\end{equation}
	which, given that $(u_k)$ is bounded in $L^{p^*}(M)$, converges to zero by \eqref{eq:spotlight}. Combining \eqref{eq:intro2}, \eqref{eq:intro1}, and \eqref{eq:introX} we have $u_k\to 0$ in $L^q(M)$.
\end{proof} 

As a consequence of the spotlight lemma we have the following compactness property for functions supported on sets thin at infinity. For an open set $M_0$ of a Riemannian manifold $M$ we denote the closure of the space of Lipschitz functions with compact support on $M_0$ in the norm of $H^{1,p}(M)$ as $H^{1,p}_0(M_0)$. We will call a sequence $(y_k)$ in $M$  discrete if it contains no bounded subsequence. 

\begin{proposition}
	Let $M$ be an $m$-dimensional non-compact, complete Riemannian manifold satisfying conditions (M1)-(M2), let $M_0$ be an open subset of $M$,  and let $1<p<m$. Let $\Gamma\subset M$ be a $(\varepsilon,\nu)$-discretization of $M$, $\varepsilon,\nu>0$. 
	If for any discrete sequence $(y_k)$ in $\Gamma$
	\begin{equation} \label{eq:vol0}
	\mathrm{vol}(M_0\cap B(y_k,\nu\varepsilon))\to 0,	
	\end{equation}
	then $H^{1,p}_0(M_0)$ is compactly embedded into $L^q(M_0)$, $p<q<p^*$.
\end{proposition}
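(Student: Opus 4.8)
The plan is to reduce strong $L^q$-convergence to the spotlight criterion \eqref{eq:spotlight} of Lemma~\ref{lem:spotlight}. Let $(u_k)$ be a bounded sequence in $H^{1,p}_0(M_0)$. Since $1<p<\infty$, the space $H^{1,p}(M)$ is reflexive, so after passing to a subsequence I may assume $u_k\rightharpoonup u$ weakly in $H^{1,p}(M)$, where $u\in H^{1,p}_0(M_0)$ because a closed linear subspace is weakly closed. Set $v_k=u_k-u$; then $(v_k)$ is bounded in $H^{1,p}_0(M_0)$, $v_k\rightharpoonup 0$, and it suffices to prove $v_k\to 0$ in $L^q(M)$, since this forces $u_k\to u$ in $L^q(M_0)$ and hence makes the embedding compact. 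I will also use that each $v_k$ vanishes a.e.\ outside $M_0$, being an $L^p$-limit of functions supported in $M_0$.

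By Lemma~\ref{lem:spotlight} it then remains to verify that $\int_{B(y_k,\nu\varepsilon)}|v_k|\,d\mathrm{vol}\to 0$ for every sequence $(y_k)$ in $\Gamma$. For this I would invoke the elementary principle that a sequence of reals tends to $0$ once every subsequence admits a further subsequence tending to $0$. Fixing a base point $o\in M$ and an arbitrary subsequence of $(y_k)$, there are two regimes to treat: either it has a bounded sub-subsequence, or $d_M(y_k,o)\to\infty$ along it, in which case it is a discrete sequence in the sense defined above.

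In the bounded case, along the relevant sub-subsequence all balls $B(y_k,\nu\varepsilon)$ lie in a fixed geodesic ball $B(o,R)$, whose closure is compact by Hopf--Rinow. Covering $\overline{B(o,R)}$ by finitely many normal charts in which the metric is comparable to the Euclidean one and applying the classical Rellich--Kondrachov theorem on each, I obtain that $v_k\to 0$ in $L^1(B(o,R))$ from $v_k\rightharpoonup 0$, whence $\int_{B(y_k,\nu\varepsilon)}|v_k|\le\int_{B(o,R)}|v_k|\to 0$. In the discrete case I would exploit that $v_k=0$ a.e.\ off $M_0$: writing $\int_{B(y_k,\nu\varepsilon)}|v_k|=\int_{M_0\cap B(y_k,\nu\varepsilon)}|v_k|$ and applying H\"older's inequality with exponents $p^*$ and $p^*/(p^*-1)$ gives
\[
\int_{M_0\cap B(y_k,\nu\varepsilon)}|v_k|\,d\mathrm{vol}\le \|v_k\|_{L^{p^*}(M)}\,\mathrm{vol}\bigl(M_0\cap B(y_k,\nu\varepsilon)\bigr)^{1-1/p^*}.
\]
Here $(v_k)$ is bounded in $L^{p^*}(M)$ by the Sobolev embedding guaranteed by (M1)--(M2), while the right-hand volume tends to $0$ by hypothesis \eqref{eq:vol0}, so the left-hand side tends to $0$. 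In both regimes the chosen subsequence has a further subsequence along which the spotlight integral vanishes, so \eqref{eq:spotlight} holds and Lemma~\ref{lem:spotlight} concludes the argument.

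The main obstacle I anticipate is the bookkeeping in reducing a general sequence $(y_k)$ to the two clean regimes and ensuring the local Rellich compactness is applied on a set where it is legitimate; the $L^{p^*}$-boundedness underlying the discrete estimate and the fact that $v_k$ inherits support in $M_0$ are both routine.
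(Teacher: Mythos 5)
Your proof is correct and follows essentially the same route as the paper's: reduce to the spotlight condition \eqref{eq:spotlight} of Lemma~\ref{lem:spotlight}, treat bounded sequences in $\Gamma$ by local compactness of Sobolev embeddings, and treat discrete sequences by H\"older's inequality with exponent $p^*$ together with hypothesis \eqref{eq:vol0} and the support of $v_k$ in $M_0$. The only cosmetic difference is in the bounded regime, where the paper observes that a bounded sequence in the $\varepsilon$-separated set $\Gamma$ takes only finitely many values, while you enclose the balls in one large geodesic ball and apply Rellich--Kondrachov there; both are instances of the same local compactness.
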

\begin{proof} Let $(u_k)$ be a sequence in  $H^{1,p}_0(M_0)$, weakly convergent to zero.
Then by compactness of local Sobolev embeddings, for any $y\in \Gamma$, 
$\int_{B(y,\nu\varepsilon)}| u_k|d\mathrm{vol}\to 0$.
If $(y_k)$, $ y_k\in\Gamma $, is a bounded sequence then it consists of finitely many values since $\Gamma$ is a discretization. In consequence  
\begin{equation}
\label{eq:dsq}
\int_{B(y_k,\nu\varepsilon)}| u_k|d\mathrm{vol}\to 0 \mbox{ for any bounded sequence } (y_k),  y_k\in\Gamma.
\end{equation}
On the other hand, if  $(y_k)$ be an aribitrary discrete sequence in $\Gamma$, 
by H\"older inequality and \eqref{eq:vol0},
 \begin{align*}
\int_{B(y_k,\nu\varepsilon)}| u_k|d\mathrm{vol}\le
\\
\left(\int_{B(y_k,\nu\varepsilon)}| u_k|^{p^*}d\mathrm{vol}\right)^{1/p^*}
\Big( \mathrm{vol}(M_0\cap B(y_k,m\varepsilon))\Big)^{1-1/p^*}\le 
\\
C\|u_k\|_{H^{1,p}(M)}\Big( \mathrm{vol}(M_0\cap B(y_k,\nu\varepsilon))\Big)^{1-1/p^*}
\to 0.
\end{align*}
Combining this with \eqref{eq:dsq} we have \eqref{eq:spotlight}. Then by Lemma~\ref{lem:spotlight} $u_k\to 0$ in $L^q(M)$, which proves the proposition.
\end{proof}	
\section{Orbital discretization and general compactness theorem}
\begin{definition}
	An $(\varepsilon,\nu)$- discretization $\Gamma$ of a Riemannian manifold $M$ is called an orbital discretization if there exist nonempty subsets $\Gamma_i\subset \Gamma$, $i\in \N$, such that
	\begin{enumerate}
		\item[(a)]  $\Gamma = \bigcup_{i=1}^\infty\Gamma_i$ and 
		  $\Gamma_i\cap \Gamma_j= \emptyset$ if $i\not=j$,
		\item[(b)] $\#\Gamma_i\le \#\Gamma_{i+1}<\infty$, $i\in\N$,
		\item[(c)] $\lim_{i\to\infty}\#\Gamma_i=\infty$.
 	\end{enumerate}
We shall write then $\Gamma\in\mathcal O_{\varepsilon,\nu}(M)$. The sets $\Gamma_i$ will be called quasiorbits.  
\end{definition}
The term {\em orbital discretization} will be  justified in the next subsection when we discretize  group orbits on a manifold.   

\begin{lemma}\label{l:balls}
	Let $\Gamma$ be an orbital discretization. For every $R>0$ and $j\in\N$ there exists $\bar i(R,j)\in \N$ such that for all $i\ge \bar i(R,j)$ and for every $x\in \Gamma_i$, there exists a subset $\Gamma_i(x)\subset\Gamma_i$ satisfying
	\begin{enumerate}
		\item[(i)] $x\in\Gamma_i(x)$,
		\item[(ii)] $d(y,z)>R$ whenever $y,z\in \Gamma_i(x)$, $y\neq z$,
		\item[(iii)] $\#\Gamma_i(x)\ge j$.
	\end{enumerate}	 
\end{lemma}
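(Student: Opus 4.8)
The plan is to take $\Gamma_i(x)$ to be a maximal $R$-separated subset of $\Gamma_i$ containing $x$. Concretely, starting from the singleton $\{x\}$, I would greedily adjoin points of $\Gamma_i$ whose distance to all previously selected points exceeds $R$, continuing until no further point can be added. Since $\Gamma_i$ is finite by property (b), this process terminates and produces a set $S$ with $x\in S$ and $d(y,z)>R$ for all distinct $y,z\in S$; I set $\Gamma_i(x):=S$. This yields (i) and (ii) at once, and the only genuine content is the lower bound (iii) on $\#\Gamma_i(x)$.

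For (iii) I would exploit the maximality of $S$ through a covering argument. If some $y\in\Gamma_i$ satisfied $d(y,s)>R$ for every $s\in S$, then $S\cup\{y\}$ would still be $R$-separated and contain $x$, contradicting maximality; hence every point of $\Gamma_i$ lies in the closed ball $\overline{B}(s,R)$ for some $s\in S$. Since $\overline{B}(s,R)\subset B(s,2R)$, Lemma~\ref{lem:intheball} bounds the number of points of $\Gamma$, and a fortiori of $\Gamma_i$, in each such ball by $n_{2R}$, with the constant $n_{2R}$ uniform over all centers in $M$. Summing over $s\in S$ gives
\[
\#\Gamma_i \le \sum_{s\in S}\#\big(\Gamma_i\cap\overline{B}(s,R)\big)\le n_{2R}\,\#S,
\]
so that $\#\Gamma_i(x)=\#S\ge \#\Gamma_i/n_{2R}$.

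Finally, to produce $\bar i(R,j)$ I would invoke properties (b) and (c): the cardinalities $\#\Gamma_i$ are nondecreasing in $i$ and tend to infinity, so there is $\bar i(R,j)\in\N$ with $\#\Gamma_i\ge j\,n_{2R}$ for all $i\ge \bar i(R,j)$. For such $i$ the estimate above gives $\#\Gamma_i(x)\ge j$, establishing (iii); note that since $n_{2R}$ is independent of the center, $\bar i(R,j)$ indeed depends only on $R$ and $j$, not on $x$. I expect the decisive step to be the covering estimate rather than any real obstacle: the one point requiring care is the passage from the open balls of Lemma~\ref{lem:intheball} to the closed balls $\overline{B}(s,R)$ (handled by enlarging the radius to $2R$) so that a uniform constant $n_{2R}$ governs every ball; once that uniformity is in hand, the greedy construction together with the divergence of $\#\Gamma_i$ does the rest.
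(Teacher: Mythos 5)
Your proposal is correct and rests on essentially the same ingredients as the paper's proof: a greedy construction of an $R$-separated subset of $\Gamma_i$ starting from $x$, with Lemma~\ref{lem:intheball} providing the uniform ball-counting constant and properties (b)--(c) guaranteeing that $\#\Gamma_i$ is eventually large enough, uniformly in $x$. The only difference is bookkeeping: the paper selects exactly $j$ points, checking at each step that the balls $B(y_\ell,R)$ around the points already chosen cannot exhaust $\Gamma_i$ (which needs $\#\Gamma_i>jn_R$), whereas you run the selection to saturation and recover (iii) from maximality via the covering bound $\#\Gamma_i\le n_{2R}\,\#\Gamma_i(x)$ --- a dual packing/covering formulation that costs the slightly larger constant $n_{2R}$ but is equally valid.
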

\begin{proof} For $j=1$ conditions (i - iii) hold tautologically when $\Gamma_i(x)=\{x\}$. 
	 We assume now that $j\ge 2$.
Let $n_R$ be as in Lemma~\ref{lem:intheball} and let $i_0\in\N$ be such that $\#\Gamma_i>jn_{R}$ for any $i\ge i_0$. Such $i_0$ always exists by property (c) in the definition of the orbital discretization. Let $y_0=x$ and let us choose recursively
$y_{k+1}\in\Gamma_i$, $k=0,\dots,j-2$, such that 
$y_{k+1}\notin B(y_\ell,R)$, $\ell=0,\dots,k$. This is possible since the balls $B(y_\ell,R)$, $\ell=0,\dots,k$ contain all together not more than  $(k+1)n_R$ points of $\Gamma_i$, and this number is less than $jn_R$ and thus less than $\#\Gamma_i$. Obviously, $d(y_k,y_\ell)>R$ whenever $k\neq \ell$. We set $\Gamma_i(x)=\{y_k\}_{k=0,\dots,j-1}$.
\end{proof}
\begin{corollary} 
Let $\Gamma$ be an orbital discretization. Then $\lim_{i\to\infty}\mathrm{diam}\, \Gamma_i=\infty$.	
\end{corollary}

\begin{definition}
Let $\Gamma\in\mathcal O_{\varepsilon,\nu}(M)$, $\nu\varepsilon<r(M)$. Let $i\in \N$ and $\lambda\ge 1$. A function $f\in L^1_{loc}(M)$ is called $(i,\lambda)$-quasisymmetric relative to $\Gamma$  if for every $\ell\ge i$ 
\begin{equation}\label{eq:qsym}
\max_{x\in\Gamma_{\ell}} \int_{B(x,\nu\varepsilon)}|f(y)| d\mathrm{vol} \, \le \, \lambda\, \min_{x\in\Gamma_{\ell}} \int_{B(x,\nu\varepsilon)}|f(y)| d\mathrm{vol}  .\end{equation} 
We shall  write then $f\in \mathcal{S}_{\Gamma,i,\lambda}(M)$.  
\end{definition}
\begin{theorem}\label{t:main}
Let $M$ be complete, noncompact, connected, $m$-dimensional Riemannian manifold satisfying (M1) - (M2). Let $\Gamma\in\mathcal O_{\varepsilon,\nu}(M)$. Let $1<p<m= \dim M$,  $p<q<p^*$, $i\in \N$ and  $\lambda\ge 1$. If a set $K\subset H^{1,p}(M)\cap \mathcal{S}_{\Gamma,i,\lambda}(M)$ is bounded in  $H^{1,p}(M)$  then it is relatively compact in $L^q(M)$.
\end{theorem}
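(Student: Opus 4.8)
The plan is to argue by contradiction using the spotlight lemma. Suppose $K$ is bounded in $H^{1,p}(M)$ but not relatively compact in $L^q(M)$. Since $H^{1,p}(M)$ is reflexive, we can extract a sequence $(u_k)\subset K$ converging weakly to some $u$; replacing $u_k$ by $u_k-u$ (and noting the quasisymmetry and boundedness are inherited, at least for the tail $\ell\ge i$) we reduce to the case of a sequence converging weakly to zero but \emph{not} strongly to zero in $L^q(M)$. By Lemma~\ref{lem:spotlight} applied to this sequence, failure of strong convergence means that condition \eqref{eq:spotlight} is violated: there exist $\delta>0$, a subsequence (still denoted $u_k$), and points $y_k\in\Gamma$ with
\[
\int_{B(y_k,\nu\varepsilon)}|u_k|\,d\mathrm{vol}\ge\delta>0.
\]

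The heart of the argument is to exploit quasisymmetry to spread this non-vanishing mass over many balls and thereby contradict boundedness in $L^1$ (equivalently in $L^q$). Each $y_k$ lies in some quasiorbit $\Gamma_{\ell(k)}$. First I would check that the indices $\ell(k)$ must tend to infinity along a subsequence: if they stayed bounded, then by property (b) the $y_k$ would range over a fixed \emph{finite} union of quasiorbits, hence over finitely many points; on those finitely many fixed balls local Sobolev compactness forces $\int_{B(y,\nu\varepsilon)}|u_k|\,d\mathrm{vol}\to0$, contradicting the lower bound $\delta$. So we may assume $\ell(k)\to\infty$, and in particular eventually $\ell(k)\ge i$, where the quasisymmetry inequality \eqref{eq:qsym} is available. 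Fixing any target count $j\in\N$, Lemma~\ref{l:balls} (applied with $R=2\nu\varepsilon$, say, so the balls $B(\cdot,\nu\varepsilon)$ are pairwise disjoint) furnishes, for all large $k$, a subset $\Gamma_{\ell(k)}(y_k)\subset\Gamma_{\ell(k)}$ with at least $j$ points, pairwise separated by more than $2\nu\varepsilon$, and containing $y_k$. By \eqref{eq:qsym}, since $y_k$ realizes a value at least $\delta$, \emph{every} point $z\in\Gamma_{\ell(k)}$ satisfies $\int_{B(z,\nu\varepsilon)}|u_k|\,d\mathrm{vol}\ge\delta/\lambda$. Summing over the $j$ disjoint balls indexed by $\Gamma_{\ell(k)}(y_k)$ gives
\[
\int_M|u_k|\,d\mathrm{vol}\ge\sum_{z\in\Gamma_{\ell(k)}(y_k)}\int_{B(z,\nu\varepsilon)}|u_k|\,d\mathrm{vol}\ge j\,\frac{\delta}{\lambda}.
\]

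Since $j$ is arbitrary, this would force $\|u_k\|_{L^1}$ to be unbounded along the sequence. To convert this into a genuine contradiction with $H^{1,p}(M)$-boundedness I would pass to $L^q$: the same disjoint-ball estimate applied with the $L^q$-norm (using that the minimal ball integral is bounded below, or more directly using \eqref{eq:qsym} in the form that all ball integrals are comparable) yields $\|u_k\|_{L^q}^q\ge j\,\eta$ for a fixed $\eta>0$ and all large $k$, whereas boundedness in $H^{1,p}(M)$ gives a uniform bound $\|u_k\|_{L^q}\le C$ via the Sobolev embedding. Choosing $j$ large enough that $j\eta>C^q$ produces the contradiction.

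The main obstacle I anticipate is the bookkeeping of quantifiers in the spreading step: the separation radius $R$ in Lemma~\ref{l:balls} must be chosen large enough (at least $2\nu\varepsilon$) that the resulting balls $B(z,\nu\varepsilon)$ are truly disjoint so the integrals add without overlap, while simultaneously the threshold index $\bar i(R,j)$ from that lemma and the quasisymmetry threshold $i$ must both be dominated by $\ell(k)$ for large $k$ — which is exactly why establishing $\ell(k)\to\infty$ is the essential preliminary. A secondary technical point is the reduction to weak limit zero: one must confirm that the quasisymmetry class $\mathcal{S}_{\Gamma,i,\lambda}(M)$, or at least the relevant non-vanishing lower bound, is preserved (possibly with an adjusted constant $\lambda$ and index $i$) when subtracting the weak limit $u$, or else argue the non-compactness directly on the original sequence without subtracting $u$, using the spotlight lemma's characterization of which sequences fail to converge.
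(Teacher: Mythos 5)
Your proposal follows the same route as the paper's proof: contradiction via Lemma~\ref{lem:spotlight}, the observation that the offending sequence $(y_k)\subset\Gamma$ must be discrete so that the quasiorbit indices $\ell(k)\to\infty$ and eventually exceed both $i$ and $\bar i(2\nu\varepsilon,j)$, the use of \eqref{eq:qsym} to push the lower bound $\delta$ (divided by $\lambda$) onto every ball of the quasiorbit, Lemma~\ref{l:balls} with $R=2\nu\varepsilon$ to produce $j$ pairwise disjoint balls, and summation to contradict a uniform $L^q$ bound. Your final conversion from the $L^1$ ball bounds to an $L^q$ bound is the right move (the paper's display does this implicitly through the constant $C_{\nu\varepsilon}$): by H\"older, $\int_{B}|u_k|\,d\mathrm{vol}\le \left(\int_B |u_k|^q\,d\mathrm{vol}\right)^{1/q}\mathrm{vol}(B)^{1-1/q}$, and (M1) (Bishop--Gromov) gives a uniform upper bound $V$ for $\mathrm{vol}(B(x,\nu\varepsilon))$, so each disjoint ball contributes at least $\eta=(\delta/\lambda)^q V^{1-q}$ to $\|u_k\|_{L^q}^q$.

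The one step that fails as written is your opening reduction: quasisymmetry is \emph{not} inherited by $u_k-u$, even for the tail $\ell\ge i$ and with adjusted constants. The class $\mathcal{S}_{\Gamma,i,\lambda}(M)$ is defined through integrals of $|f|$ and is not invariant under subtraction: on one ball of a quasiorbit $u_k$ may coincide with $u$, making the minimum in \eqref{eq:qsym} for $|u_k-u|$ equal to zero while the maximum stays positive, so no finite $\lambda$ works. (The paper's own proof makes essentially the same silent reduction ``to weakly null sequences in the class'' and leaves the case of a nonzero weak limit unaddressed, so this is a shared defect rather than a deviation.) The repair is exactly your second fallback, and it should be carried out rather than left as an option: apply Lemma~\ref{lem:spotlight} to $v_k=u_k-u$ and estimate $|v_k|\le|u_k|+|u|$ on the balls $B(y_k,\nu\varepsilon)$. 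For a bounded sequence $(y_k)$ (which takes finitely many values in $\Gamma$) local compact embedding kills the integrals; for a discrete sequence $(y_k)$ one has $\int_{B(y_k,\nu\varepsilon)}|u|\,d\mathrm{vol}\to 0$ because $u\in L^p(M)$ and the ball volumes are uniformly bounded, so a lower bound $\delta$ for $\int_{B(y_k,\nu\varepsilon)}|v_k|\,d\mathrm{vol}$ yields the lower bound $\delta/2$ for $\int_{B(y_k,\nu\varepsilon)}|u_k|\,d\mathrm{vol}$ for large $k$, and your spreading argument then applies verbatim to $u_k$ itself, which \emph{is} in the class. With this modification your proof is complete and coincides in substance with the paper's.
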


\begin{remark}
	1. For any $\Gamma$, $i$ and $\lambda$ the set $\mathcal{S}_{\Gamma,i,\lambda}(M)$ contains infinitely many linearly independent functions from $H^{1,p}(M)$. In particular, it has the following functions. Let $\varphi_x\in C^\infty(M)$ be a nonnegative nonzero function with $\mathrm{supp}\, \varphi_x\subset B(x,\varepsilon/2)$, $x\in \Gamma_\ell$, and define 
	\[ f(y)= \sum_{x\in \Gamma_{\ell}} \frac{\varphi_x(y)}{\int_M \varphi_x d \mathrm{vol}},\, \qquad \ell\ge i.\]   
	2.   For any $\Gamma$, $i$, and $\lambda$ the set $H^{1,p}(M)\cap \mathcal{S}_{\Gamma,i,\lambda}(M)$ is closed with respect to the weak convergence in  $H^{1,p}(M)$, since all the quantities in the relation \eqref{eq:qsym} are weakly continuous in  $H^{1,p}(M)$. 
\end{remark}

\begin{proof} By reflexivity
it is sufficient to show that if $(u_k)$ is a sequence in $ H^{1,p}(M)\cap \mathcal{S}_{\Gamma,i,\lambda}(M)$ weakly convergent to zero in  $H^{1,p}(M)$ then  $u_k\to 0$ in $L^q(M)$.
Assume that this is not the case. Then by Lemma~\ref{lem:spotlight} there is a sequence $(y_k)$, $y_k\in \Gamma$, and $\delta>0$ such that 
\begin{equation}
\label{eq:bottom}
\int_{B(y_k,\nu\varepsilon)}|u_k| d\mathrm{vol} \,\ge\delta.
\end{equation}

Note that if the sequence $(y_k)$ has a bounded subsequence it has the constant subsequence, by compactness
of local Sobolev embeddings \eqref{eq:bottom} cannot hold, and thus $(y_k)$ is necessarily discrete.
 So we can assume that $y_k\in \Gamma_{\ell_k}$ with $\ell_k>i$ and $\ell_k\rightarrow \infty$. The functions $u_k$ are of the quasisymmetry class $\mathcal{S}_{\Gamma,i,\lambda}(M)$, so  by  \eqref{eq:qsym}, for $k$ large enough we have for every $x\in \Gamma_{\ell_k}$, $\ell_k\ge i$,
\begin{align}
\int_{B(x,\nu\varepsilon)}|u_k| d\mathrm{vol} \,\ge\, C\lambda\int_{B(y_k,\nu\varepsilon)}|u_k| d\mathrm{vol}\ge \, C \lambda\delta\eqdef\beta > 0 .  
\end{align}
Let us apply Lemma~\ref{l:balls} with $R=2\nu\varepsilon$ and for each $j\in\N$  choose $k_j$ such that
 $\ell_{k_j}\ge \bar i(2\nu\varepsilon,j)$. This gives 
\begin{equation}
	\int_M |u_k|^q d\mathrm{vol} \ge  C_{\nu\varepsilon} \sum_{x\in \Gamma_{\ell_{k_j}}}\int_{B(x,\nu\varepsilon)} |u_k| d\mathrm{vol}  \ge  C_{\nu\varepsilon} j\beta.
\end{equation}  
Since $j$ is arbitrarily large, we have a contradiction that proves the theorem. 
\end{proof}
%

\begin{example}
Let $M=\R^m$ be equipped by the usual Euclidean metric. Let $A$ be a real $m\times m$ matrix with eigenvalues $\lambda_j$, $\Re \lambda_j > 0$. We set $ \lambda = \min_{ 1\le j \le m} \Re \lambda_j$ and $ \Lambda = \max_{ 1\le j \le m} \Re \lambda_j$. Following Stein and Wainger \cite{SW} associate to
$A$ the dilation matrix $\delta_t= \exp(A\ln t)x$. Moreover, we can introduce a positive, $\delta_t$-homogeneous distance functions $\varrho$, i.e., a continuous functions $\varrho$ on
$\R^m$  such that
\begin{align*} 
 \varrho(x)\ge 0\qquad \text{and}\quad  	\varrho(x)>0\; \text{for} \; x\not=0,\\
 \varrho(\delta_t(x)) = t \varrho(x)\; \text{for all}\;  t>0,\;  x\in \R^m.
 \end{align*}

Furthermore, one can prove that  for any $\eta>0$ there are positive constants $c_1,c_2$ such that
\begin{align*}
c_1 |x|^{1/(\lambda-\eta)} \le \varrho(x) \le c_2 |x|^{1/(\Lambda+\eta)}\qquad \text{if} \quad |x| >1 ,\\
c_1 |x|^{1/(\Lambda+\eta)} \le \varrho(x) \le c_2 |x|^{1/(\lambda-\eta)}\qquad \text{if} \quad |x|< 1 .  
\end{align*} 
The finctions of the form $x\mapsto f(\varrho(x))$ are called quasiradial. 
The sets $\Sigma_\varrho (r) =\{x\in \R^m: \varrho(x)=r \}$ are compact. One can  easily  construct an orbital $(\varepsilon, \nu)$-discretization of $\R^m$ such that any quasiorbits $\Gamma_i$ is contained in some of the set  $\Sigma_\varrho (r)$ and different quasiorbits are contained in   different sets  $\Sigma_\varrho (r)$. Theorem \ref{t:main} implies that the subspace of quasiradial functions in $H^{1,p}(\R^m)$ is compactly embedded into $L^q(\R^m)$. 
\end{example}
\section{Compactness for functions with group symmetry}
Any discretization of a noncompact manifold can be partitioned as an orbital discretization. However, when one wants, as in this section, to study compactness of embedding of  spaces invariant with respect to a group action it is natural to consider a specific kind of orbital discretizations, namely those associated with the group orbits. Similarly, in the next section we will study 
compactness of embedding of  spaces with reduced number of variables, where quasiorbits are associated with the level sets of a map.  
 
Let $G$ be a compact connected  group of isometries of the manifold $M$. Then $H^{1,p}_G(M)$ will denote a subspace of $H^{1,p}(M)$ consisted of all $G$-invariant functions.  We will use the notion  of coercive group action introduced in \cite{Skrti1}.
\begin{definition}\label{def:coerciveG}
	We say that a continuous action of a group $G$ on a
	complete Riemannian manifold $M$ is {\em coercive} if for every $t>0$,
	the set
	\[
	O_t=\lbrace x\in M: \; \mathrm{diam}\, G x\le t\rbrace
	\]
	is bounded.
\end{definition}
If the sectional curvature of $M$ is non-positive and the compact connected group $G$ of isometries fixes some point, then $G$ is coercive if and only if  $G$ has no other fixed point: see \cite[Proposition 3.1]{Skrti1}. An example of a compact connected coercive group without fixed points (see the end of \cite[Section 3]{Skrti1}) is
$M=S^1\times \R^n$ (a Riemannian product of the unit
circle and the Euclidean space), $n\ge 2$, and $G = S^1\times SO(n)$ acting on $M$
by the formulae $(e^{i\varphi},h)(e^{i\psi},x) = (e^{i(\varphi+\psi)},h(x))$,
$e^{i\varphi},e^{i\psi}\in S^1$, $h\in SO(n)$ and $x\in \R^n$.
 
\begin{proposition}\label{p:group}
	Let $G$ be a compact, connected group of isometries acting coercively on the manifold $M$. Then there exists an orbital discretization $\Gamma\in \mathcal O_{\varepsilon,2}(M)$ such that  any  quasiorbit $\Gamma_i$  is a subset of a distinct orbit of $G$.  
\end{proposition}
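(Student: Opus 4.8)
The plan is to construct $\Gamma$ one orbit at a time, organizing the orbits through the quotient metric space. I would first work in $M/G$ with the quotient metric $\bar d(\pi(x),\pi(y))=d_M(Gx,Gy)$, where $\pi\colon M\to M/G$ is the projection; since $G$ acts by isometries this equals $\min_{g\in G}d_M(x,gy)$ and realizes the distance between orbits, so $\bar d(\pi(x),\pi(y))\le d_M(a,b)$ for all $a\in Gx$, $b\in Gy$. By Hopf--Rinow $M$ is proper, and any closed bounded subset of $M/G$ has preimage contained in a bounded tube $\{x:d_M(x,Gx_0)\le\rho\}$ about a single compact orbit, hence compact; thus $M/G$ is proper, and it is noncompact because $M/G$ compact would force $M=\pi^{-1}(M/G)$ to be compact.

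Fixing any $\varepsilon>0$, I would take a maximal $\varepsilon$-separated subset $\{\bar z_i\}_{i\in\N}$ of $M/G$; properness makes it countably infinite with only finitely many points in each bounded ball, so I may enumerate it with $\bar d(\bar z_0,\bar z_i)\to\infty$. Let $O_i=\pi^{-1}(\bar z_i)$ be the associated distinct orbits, compact and connected since $G$ is compact and connected, and let $\Gamma_i\subset O_i$ be a maximal $\varepsilon$-separated subset, necessarily finite and nonempty. With $\Gamma=\bigcup_i\Gamma_i$, points of a single $\Gamma_i$ are $\varepsilon$-separated by construction, while for $y\in\Gamma_i$ and $y'\in\Gamma_j$ with $i\ne j$ one has $d_M(y,y')\ge\bar d(\bar z_i,\bar z_j)\ge\varepsilon$. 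For the covering, given $x\in M$ I would pick $i$ with $\bar d(\pi(x),\bar z_i)<\varepsilon$, obtain $w\in O_i$ with $d_M(x,w)<\varepsilon$, and then $y\in\Gamma_i$ with $d_M(w,y)<\varepsilon$ by maximality, so $x\in B(y,2\varepsilon)$. Hence $\Gamma$ is an $(\varepsilon,2)$-discretization with each quasiorbit on a distinct orbit, giving property (a).

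To obtain (b) and (c) I must control the cardinalities $\#\Gamma_i$. Coercivity yields $\mathrm{diam}\,O_i\to\infty$: every $w\in O_i$ satisfies $d_M(w,O_0)=\bar d(\bar z_i,\bar z_0)\to\infty$, so the orbits recede from any fixed basepoint, and if $\mathrm{diam}\,O_{i}$ stayed bounded by $t$ along a subsequence those orbits would lie in the bounded set $O_t$. The crucial estimate is the lower bound $\#\Gamma_i\ge\mathrm{diam}(O_i)/(2\varepsilon)$: since $O_i$ is connected and covered by the $N=\#\Gamma_i$ balls $B(y,\varepsilon)$, $y\in\Gamma_i$, the intersection graph of this finite open cover is connected, so any two centers are joined by a chain of at most $N-1$ mutually overlapping balls and therefore lie within $2\varepsilon(N-1)$ of one another; as each point of $O_i$ is within $\varepsilon$ of a center this gives $\mathrm{diam}\,O_i<2\varepsilon N$. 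Consequently $\#\Gamma_i\to\infty$ along the enumeration, and since this means every integer value is attained only finitely often, I would finally reindex the family $\{\Gamma_i\}$ in nondecreasing order of $\#\Gamma_i$; relabeling affects neither the separation nor the covering, and produces $\#\Gamma_i\le\#\Gamma_{i+1}<\infty$ with $\#\Gamma_i\to\infty$, establishing (b) and (c).

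I expect the main obstacle to be this last lower bound, namely converting the geometric growth of the orbits forced by coercivity into growth of the discrete cardinalities $\#\Gamma_i$. Connectedness of the orbits, which comes from connectedness of $G$, together with the fact that a finite open cover of a connected set has connected nerve, is exactly what turns the diameter into a count; the point requiring care is that consecutive balls in the chain genuinely overlap inside $O_i$, so that their centers are within $2\varepsilon$.
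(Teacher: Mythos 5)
Your proof is correct, and its skeleton matches the paper's: discretize the orbit space, discretize each selected orbit, take the union as the $(\varepsilon,2)$-discretization, convert coercivity plus connectedness of orbits into growth of the cardinalities $\#\Gamma_i$, and finally reorder the quasiorbits by cardinality. The genuine difference is in how the quotient is handled. The paper works with the union $\widetilde{M}$ of principal orbits, invoking Bredon's principal orbit theorem (so that $\widetilde{M}$ is open and dense) and the Riemannian structure on $\widetilde{M}/G$ from Gallot--Hulin--Lafontaine, for which the projection realizes the distance between orbits; it then takes an $(\varepsilon,1)$-discretization of that Riemannian quotient. You instead work with the full metric quotient $M/G$ with the orbit distance, proving directly that it is a proper, noncompact metric space (via Hopf--Rinow, compactness of orbits, and the $G$-invariance of $x\mapsto d_M(x,Gx_0)$, which identifies the preimage of a quotient ball with the tube around $Gx_0$), and then take a maximal $\varepsilon$-separated set there. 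This buys self-containedness: no transformation-group or Riemannian-submersion machinery is needed, and you avoid the density argument that the paper's route implicitly requires in order to cover all of $M$, not just $\widetilde{M}$, by balls centered at points of $\Gamma$. You also spell out the chain (nerve-connectedness) argument giving $\#\Gamma_i\ge \mathrm{diam}(O_i)/(2\varepsilon)$ for a connected orbit, a step the paper compresses into the single sentence that the orbits are connected, hence $\#\Gamma_\ell\to\infty$. What the paper's route buys in exchange is a manifold structure on the (principal) orbit space, which is useful beyond this proposition but is not needed for the statement itself; both proofs use coercivity identically, namely that orbits of diameter at most $t$ lie in the bounded set $O_t$, so along any enumeration escaping to infinity the orbit diameters must diverge.
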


\begin{proof}
Let $\widetilde{M}$ be a union of all principal orbits of the group $G$. The set $\widetilde{M}$	is a dense open subset of $M$, cf. \cite[Chapter IV, Theorem 3.1]{Bre}. On the coset space $\widetilde{M}/G$ one can introduce a Riemannian structure such that the projections $p: \widetilde{M} \rightarrow \widetilde{M}/G$  have the following property
\[ d_{\widetilde{M}/G}(p(x),p(y)) = d_M (Gx, Gy) \]
where the  distances are taken on respective manifolds, cf. \cite[Theorems 2.28 and 2.109]{GHL}. Let $\widetilde{\Gamma}=\{Gx_\ell\}_{\ell\in \N}$ be an $(\varepsilon,1)$- discretization of $\widetilde{M}/G$ with $\varepsilon< r(M)/3$.  Let $\dot{\Gamma}_\ell$ be an $(\varepsilon, 1)$-discretization of the orbit $Gx_\ell$ in $M$. Then $\Gamma=\bigcup_{\ell=1}^\infty \dot{\Gamma}_\ell$ is an $(\varepsilon,2)$-discretization of $M$. Let $\{\Gamma_i\}$ be the family $\{\dot{\Gamma}_\ell\}$ reordered by the number of elements in $\dot{\Gamma}_\ell$. Then  $\Gamma=\bigcup_i\Gamma_i$ is obviously a $(\varepsilon,2)$-discretization of $M$. We  prove that it is an orbital discerization.  Conditions (a) and (b) are satisfied by the construction.  The condition (c) is a consequence of the coercivity of the action of $G$ as follows. Let $R>0$. By the coercivity all sets $\Gamma_i$ of diameter not exceeding $R$ lie in a bounded set $O_R$. However, only finitely many elements of $\Gamma$ may lie in $O_R$. So there exists $i_R\in \N$ such that diameter of $Gx_\ell$ is greater then $R$ whenever $\ell\ge i_R$. The orbits  $Gx_\ell$ are connected since $G$ is connected, therefore $\#\Gamma_{\ell}\rightarrow \infty$.    
\end{proof}

Taking into account the above proposition one can apply Theorem \ref{t:main} to sets of quasisymmetric functions related to the action of a group $G$ of isometries of $M$. In particular it can be applied to the subspaces $H^{1,p}_G(M)$  of $H^{1,p}(M)$ consisting of all $G$-symmetric functions.   

\begin{theorem}\label{thm:groupsymm}
	Let $G$ be a compact, connected group of isometries of an $m$-dimensional, non-compact, connected and complete Riemannian manifold $M$ satisfying (M1) - (M2). Let $1<p<m$ and $p<q<p^*$. If  $G$ is coercive then the subspace  $H^{1,p}_G(M)$ is compactly embedded into $L^q(M)$.
	Furthermore, if $H^{1,p}_G(M)$ is compactly embedded into $L^q(M)$ then $G$ is coercive
	provided that the injectivity radius $r(M)=\inf_{x\in M}r(x)$ of the manifold $M$ is positive.
\end{theorem}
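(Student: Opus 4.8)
The plan is to prove the two implications separately, obtaining \emph{sufficiency} directly from the machinery already set up and \emph{necessity} by an explicit construction.

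For sufficiency I would feed Proposition~\ref{p:group} into Theorem~\ref{t:main}. Assuming $G$ coercive, Proposition~\ref{p:group} furnishes an orbital discretization $\Gamma\in\mathcal O_{\varepsilon,2}(M)$ whose every quasiorbit $\Gamma_\ell$ lies inside a single $G$-orbit. The key observation is that every $G$-invariant function is then quasisymmetric with the optimal constant $\lambda=1$: if $x,x'\in\Gamma_\ell$ lie in the same orbit $Gx_\ell$, choose $g\in G$ with $gx=x'$; since $g$ is an isometry it maps $B(x,\nu\varepsilon)$ onto $B(x',\nu\varepsilon)$ and preserves $d\mathrm{vol}$, while $G$-invariance gives $|u(gy)|=|u(y)|$, whence $\int_{B(x,\nu\varepsilon)}|u|\,d\mathrm{vol}=\int_{B(x',\nu\varepsilon)}|u|\,d\mathrm{vol}$. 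Thus the maximum and minimum in \eqref{eq:qsym} coincide and $H^{1,p}_G(M)\subset\mathcal S_{\Gamma,1,1}(M)$. Theorem~\ref{t:main} then shows that every bounded subset of $H^{1,p}_G(M)$ is relatively compact in $L^q(M)$, which is exactly compactness of the embedding.

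For necessity I would argue by contraposition. Suppose $G$ is not coercive; then for some $t>0$ the set $O_t$ is unbounded, so there is a sequence $(x_k)$ in $O_t$ leaving every bounded subset of $M$, i.e. $\mathrm{diam}\,Gx_k\le t$ while $d_M(x_0,x_k)\to\infty$ for a fixed $x_0$. Fix $\rho\in(0,r(M))$ — the only place the hypothesis $r(M)>0$ enters — and a nonnegative $\psi\in C^\infty_c(\Omega_\rho)$, $\psi\not\equiv0$, and transplant it by the geodesic normal chart, $\phi_k\eqdef\psi\circ e_{x_k}^{-1}$. Symmetrizing against the normalized Haar measure $dg$ of $G$,
\[
u_k(y)\eqdef\int_G\phi_k(g^{-1}y)\,dg,
\]
produces a nonnegative element of $H^{1,p}_G(M)$ whose support lies in $\bigcup_{g\in G}B(gx_k,\rho)\subset B(x_k,\rho+t)$, since $x_k$ and $gx_k$ both belong to an orbit of diameter at most $t$. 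Three estimates then finish the argument. First, each $g$ acts as an isometry, so $\|\phi_k(g^{-1}\cdot)\|_{H^{1,p}}=\|\phi_k\|_{H^{1,p}}$, and Minkowski's integral inequality (applied to the $L^p$-norms of $u_k$ and of $\nabla u_k$, using that $\nabla$ commutes with isometries) gives $\|u_k\|_{H^{1,p}}\le\|\phi_k\|_{H^{1,p}}$, bounded uniformly in $k$; hence $(u_k)$ is bounded in $H^{1,p}(M)$. Second, Fubini together with volume-preservation of $G$ yields $\|u_k\|_{L^1}=\|\phi_k\|_{L^1}\eqdef a>0$ independent of $k$, and combined with the uniform bound $\mathrm{vol}\,B(x_k,\rho+t)\le V<\infty$ furnished by \eqref{eq:vol}, Hölder's inequality on the support gives the \emph{lower} bound $\|u_k\|_{L^q}\ge a\,V^{-(1-1/q)}\eqdef c>0$. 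Third, because the supports eventually leave every compact set, testing against $C^\infty_c$ functions forces any weak $H^{1,p}$-limit of $(u_k)$ to vanish, so $u_k\rightharpoonup0$; were the embedding $H^{1,p}_G(M)\hookrightarrow L^q(M)$ compact, a subsequence would converge in $L^q$ to this weak limit $0$, contradicting $\|u_k\|_{L^q}\ge c$. Thus compactness forces coercivity.

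I expect the one genuinely technical point to be the uniform upper bound on $\|\phi_k\|_{H^{1,p}}$: it requires the normal charts $e_{x_k}$ to distort the gradient and the volume element by factors independent of the base point $x_k$. This is precisely the role of $r(M)>0$, which guarantees charts of a fixed radius $\rho$ at every point, together with the lower Ricci bound (M1) controlling the volume element via Bishop--Gromov. The lower $L^q$-bound, by contrast, is cheap, because symmetrization preserves the $L^1$-mass while the support has uniformly bounded volume.
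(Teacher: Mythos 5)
Your sufficiency argument is correct and is exactly the paper's proof: Proposition~\ref{p:group} produces the orbital discretization, isometric invariance gives $H^{1,p}_G(M)\subset \mathcal S_{\Gamma,1,1}(M)$, and Theorem~\ref{t:main} concludes. Your necessity argument also follows the same strategy as the paper (a $G$-symmetrized bump carried along a discrete sequence $(x_k)$ with $\mathrm{diam}\,Gx_k\le t$, a uniform $H^{1,p}$ bound via Minkowski's integral inequality, a uniform lower $L^q$ bound via the $L^1$ mass and a uniform volume bound on the supports, and escape of supports to rule out compactness). However, it contains one genuine gap, precisely at the point you flagged: the uniform bound on $\|\phi_k\|_{H^{1,p}}$ for $\phi_k=\psi\circ e_{x_k}^{-1}$ does \emph{not} follow from (M1), (M2) and $r(M)>0$. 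Bishop--Gromov (the content of (M1) used in \eqref{eq:vol}) controls only the volume element, i.e.\ $\det g$ in normal coordinates, from above. To bound $|\nabla(\psi\circ e_{x_k}^{-1})|_g$ for a \emph{general} bump $\psi$ you need an upper bound on $g^{ij}$, equivalently a lower bound on the eigenvalues of $g_{ij}$ in the chart at $x_k$, uniform in $k$; by Rauch comparison this requires an upper bound on sectional curvature, which a lower Ricci bound plus positive injectivity radius does not provide. So for an arbitrary nonradial $\psi$ the family $(\phi_k)$ may fail to be bounded in $H^{1,p}(M)$, and this step of your proof breaks down.

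The repair is small but essential, and it is what the paper does: build the bump intrinsically from the Riemannian distance. If you take $\psi$ radial, $\psi(v)=\eta(|v|)$, then $\phi_k=\eta\circ d_M(\cdot,x_k)$ on $B(x_k,\rho)$ (normal coordinates preserve radial distances), and since $|\nabla d_M(\cdot,x_k)|_g=1$ a.e.\ inside the injectivity radius, one gets $\|\phi_k\|^p_{H^{1,p}}\le (\mathrm{Lip}(\eta)^p+\|\eta\|_\infty^p)\,\mathrm{vol}\,B(x_k,\rho)$, which \emph{is} uniformly bounded by the Bishop volume bound; the paper's choice $[r-d_M(g\cdot,x_k)]_+$ is exactly such a tent function, and this is the only place $r(M)>0$ is really needed. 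A second, minor, slip: $\|\phi_k\|_{L^1}$ is not literally ``independent of $k$'' (the chart volume element varies with $x_k$); what is true, and suffices, is that it is uniformly bounded below, e.g.\ because $\psi\ge c>0$ on $\Omega_{\rho/2}$ gives $\|\phi_k\|_{L^1}\ge c\,\mathrm{vol}\,B(x_k,\rho/2)$, and $\inf_k \mathrm{vol}\,B(x_k,\rho/2)>0$ by (M1)--(M2) (Remark~\ref{rem:anyball}). With these two corrections your argument coincides with the paper's; the paper finishes by noting the $\psi_k$ have disjoint supports, hence are uniformly separated in $L^q$, which is equivalent to your weak-limit-zero contradiction.
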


\begin{proof}
	Sufficiency in the theorem follows from Theorem~\ref{t:main} with the orbital discretization given by Proposition \ref{p:group} since  $H^{1,p}_G(M)\subset H^{1,p}(M)\cap \mathcal{S}_{\Gamma,1,1}$. In particular, by isometry, 
	\[
	\int_{B(x,\nu\varepsilon)}|f(y)| d\mathrm{vol} \, = \,  \int_{B(z,\nu\varepsilon)}|f(y)| d\mathrm{vol}, \qquad z\in Gx
	 \]
	whenever $f\in H^{1,p}_G(M)$. 
	
	{\em Proof of necessity}. If $G$ is not coercive, there exists $R>0$ and a discrete sequence $(x_k)$ in $M$ such that $G x_k\subset B(x_k,R)$. 
	Let $r\in(0,r(M))$ and let us replace $x_k$ with a renumbered subsequence such that distance between any two terms in the sequence will be greater than $2(R+r)$. 
	Let
	$$
	\psi_k(x)\eqdef\int_G [r-d_M(gx, x_k)]_+\; \mathrm{d}g,\:x\in M,
	$$
	where the Haar measure of $G$ is normalized to the value $1$. By the Minkowski integral inequality, taking into account that $G$ is a group of isometries on $M$ and that the injectivity radius of $M$) is positive, we have
	\begin{align*}
		\|\psi_k\|_{H^{1,p}(M)}\le \int_G \| [r-d_M(g\cdot,x_k)]_+\|_{H^{1,p}(M)} \mathrm{d}g=
		\\
		\int_G \| [r-d_M(\cdot ,x_k)]_+\|_{H^{1,p}(M)} \mathrm{d}g=
		\\
		\| [r-d_M(\cdot ,x_k)]_+\|_{H^{1,p}(M)}
		\le
		C.
	\end{align*}
The constant $C$ is independent of $k$, since, using the normal coordinates at $x_k$ one has $|\nabla d_M(x,x_k)|_g=1$, $x\neq x_k$. 
	Note that the supports of the functions $\psi_k$ are disjoint, and therefore 
	$$
	\|\psi_\ell-\psi_n\|^q_{L^q(M)}=\|\psi_m\|^q_{L^q(M)}+\|\psi_n\|^q_{L^q(M)}\ge2\inf_k\|\psi_k\|^q_{L^q}.
	$$
	Furthermore, 
	\begin{align*}
		\mathrm{vol}(B({x_k,R+r}))^{1-1/q}&\|\psi_k\|_{L^q}
		\ge \int_M \psi_k\,d\mathrm{vol}=
		\\
		&\int_G \int_M[r-d_M(g\cdot ,x_k)]_+\;d\mathrm{vol}\;\mathrm{d}g
		=
		\\
		&\int_M[r-d_M(\cdot , x_k)]_+\;d\mathrm{vol}\ge  \frac12\mathrm{vol}(B(x_k,r/2).
	\end{align*}
Since, by  (M1)-(M2), $\sup_{k\in\N}\mathrm{vol}(B({x_k,R+r})<\infty$ and \\ $\inf_{k\in\N}\mathrm{vol}(B(x_k,r/2)>0$, $\|\psi_k\|_{L^q(M)}$ is bounded away from zero. Therefore 
	we have a sequence, bounded in $H^{1,p}(M)$ and discrete in $L^q$, and so
	the embedding $H^{1,p}(M)\hookrightarrow L^q(M)$ is not compact.
\end{proof}
\section{Compactness for functions with reduced number of variables}
In this section we will study compactness caused by reduction of the number of variables, i.e. compactness of subspaces of functions of the form $f\circ\varphi$ with fixed $\varphi$, for example $f:\R\to\R$ and 
$\varphi:\R^m\to\R$ defined by $\varphi(x)=|x|$. 
 
 We assume that $M$ is a complete smooth connected $m$-dimensional non-compact Riemannian manifold and  $N$ is a smooth $n$-dimensional connected Riemannian manifold, $n<m$. 
 Let $\varphi:M\to N$ be a Lipschitz-continuous map, which implies it is differentiable almost everywhere on $M$.
 We will use the classical coarea formula relative to $\varphi$, cf. \cite{Fed}:  for any measurable non-negative function $u(x)$,
 \begin{equation}
 \label{eq:coarea}
 \int_M u(x) \mathcal J_\varphi(x) d\mathrm{vol}_M(x)=\int_N\left[\int_{\varphi^{-1}(z)}u(x)\mathrm d\mathcal H_{m-n}(x) \right] d\mathrm{vol}_N(z),
 \end{equation}
 where 
$\mathcal J_\varphi(x)$
is the absolute value of the  normal Jacobian  of $\varphi$  (the determinant of the pushforward $d_x\varphi$ restricted to the orthogonal complement to its kernel) on 
the level set $\varphi^{-1}(z)$, $z\in N$, and $\mathcal H_{m-n}$ is the $m-n$-dimensional Hausdorff measure on $\varphi^{-1}(z)$. 
If, additionally, $\varphi\in C^{m-n+1}(M,N)$, then, by Sard's theorem, almost every $z\in N$ is a regular value of $\varphi$ and for every such $z$ the set
 $\varphi^{-1}(z)\subset M$ has a natural structure of $m-n$-dimensional Riemannian manifold, whose $m-n$-dimensional Hausdorff measure becomes
 the Riemannian measure on $\varphi^{-1}(z)$ with the Riemannian structure inherited from $M$ (see \cite[page 159]{Chavel}   for details).
 
Applying the coarea formula to the characteristic function of the set $\{x: \mathcal{J}_\varphi(x)=0\}$ we discover that 
\[ \mathcal{H}_{m-n}(\{x: \mathcal{J}_\varphi(x)=0\}\cap \varphi^{-1}(z))=0  \] 
for $\mathrm{vol}_N$ a.e. $z\in N$. This is a weak variant of Sard's theorem that holds for Lipschitz mappings. Thus the function $x\mapsto \mathcal{J}_\varphi(x)^{-1}$ is $\mathcal{H}_{m-n}$ a.e. finite on the level set $\varphi^{-1}(z)$ for  $\mathrm{vol}_N$ a.e. $z\in N$. 
 We assume that
 \begin{equation}
 \label{eq:Jphinz}
 \mathcal J_\varphi\neq 0\mbox{ a. e. on }M,
 \end{equation}
 and
\begin{equation}
\label{eq:mualpha}
\Psi(z)\eqdef \int_{\varphi^{-1}(z)}\frac{\mathrm{d}\mathcal H_{m-n}}{\mathcal J_\varphi}\quad<\infty\text{ for a.e. }z\in N.
\end{equation}

Note that set $\varphi(M)$ is connected. 
Consider the following subspace of $H^{1,p}(M)$:
\begin{align}
\label{eq:Sob-phi}
H^{1,p}_\varphi(M) \eqdef \{g\in H^{1,p}(M): &\; g = f\circ\varphi \quad \text{with}\quad \\
& f\in  L^p(\varphi(M), \Psi d\mathrm{vol}_{N})\}. \nonumber
\end{align}
By the coarea formula applied to $\frac{|f_k\circ\varphi|^p}{\mathcal J_\varphi}$, functions in $H^{1,p}_\varphi(M)$ satisfy the following relation:
\begin{equation}
\label{eq:Lq}
\int_M |f\circ\varphi|^p d\mathrm{vol}_M(x)=\int_{\varphi(M)} |f(z)|^p \Psi(z) d\mathrm{vol}_N(z).
\end{equation}
\begin{proposition}
If $\varphi$ is Lipschitz-continuous and satisfies \eqref{eq:Jphinz} and \eqref{eq:mualpha}, then $H^{1,p}_\varphi(M)$ is a closed subspace of $H^{1,p}(M)$. 
Moreover, if $\varphi(M)$ has a nonempty interior with and a boundary of measure zero, and $f$ is a function on $N$ such that both functions  $f$  and $\nabla_N f$ are in  $L^p(\varphi(M), \Psi d\mathrm{vol}_{N})$, then $f\circ\varphi\in H^{1,p}_\varphi(M)$. 
\end{proposition}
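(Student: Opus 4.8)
The plan is to prove the two assertions separately. The closedness of $H^{1,p}_\varphi(M)$ follows quickly from the isometry \eqref{eq:Lq} together with completeness of the weighted space, whereas the second statement rests on a chain rule for the composition of the Sobolev function $f$ with the merely Lipschitz map $\varphi$. For closedness, I would take $g_k=f_k\circ\varphi\in H^{1,p}_\varphi(M)$ with $g_k\to g$ in $H^{1,p}(M)$; since $H^{1,p}(M)$ is complete, $g\in H^{1,p}(M)$ and it remains to exhibit a representing $f$. Applying \eqref{eq:Lq} to the differences $f_k-f_\ell$ gives
\[
\|f_k-f_\ell\|_{L^p(\varphi(M),\Psi\,\mathrm{d}\mathrm{vol}_N)}^p=\|g_k-g_\ell\|_{L^p(M)}^p,
\]
so $(f_k)$ is Cauchy in the complete space $L^p(\varphi(M),\Psi\,\mathrm{d}\mathrm{vol}_N)$ and converges there to some $f$. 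One further application of \eqref{eq:Lq} shows $f_k\circ\varphi\to f\circ\varphi$ in $L^p(M)$; comparing this with $g_k\to g$ in $L^p(M)$ and using uniqueness of $L^p$-limits yields $g=f\circ\varphi$ a.e., whence $g\in H^{1,p}_\varphi(M)$.

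For the second assertion I would set $U=\mathrm{int}\,\varphi(M)$. Since $\partial\varphi(M)$ has measure zero and $\Psi$ is a.e.\ finite, the measure $\Psi\,\mathrm{d}\mathrm{vol}_N$ does not charge $\varphi(M)\setminus U$, so $f$ and $\nabla_N f$ lie in $L^p(U,\Psi\,\mathrm{d}\mathrm{vol}_N)$ with unchanged norms; applying the coarea formula \eqref{eq:coarea} to $\chi_{\varphi^{-1}(\partial\varphi(M))}/\mathcal J_\varphi$ together with \eqref{eq:Jphinz} shows that $\varphi^{-1}(\partial\varphi(M))$ is $\mathrm{vol}_M$-null, so almost every $x\in M$ satisfies $\varphi(x)\in U$ and $f\circ\varphi$ is defined a.e. The central step is the chain rule, namely that the weak gradient of $f\circ\varphi$ exists and satisfies $|\nabla(f\circ\varphi)(x)|\le\|d_x\varphi\|\,|\nabla_N f(\varphi(x))|$ a.e. I would establish this by first invoking it for smooth $f$, where it is classical and $f\circ\varphi$ is Lipschitz, and then approximating the given $f$ by smooth functions $f_j$ on $U$ with $f_j\to f$ and $\nabla_N f_j\to\nabla_N f$ in $L^p(U,\Psi\,\mathrm{d}\mathrm{vol}_N)$. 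The coarea identity, applied to $f_j-f$ and to $\nabla_N f_j-\nabla_N f$, transfers these convergences into convergence of $f_j\circ\varphi$ and of its gradients in $L^p(M)$, and passing to the limit identifies the weak gradient of $f\circ\varphi$.

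Granting the chain rule, the norm bound is immediate. As $\varphi$ is $L$-Lipschitz we have $\|d_x\varphi\|\le L$ a.e., hence $|\nabla(f\circ\varphi)|\le L\,|\nabla_N f\circ\varphi|$, and applying the coarea formula to $|\nabla_N f\circ\varphi|^p/\mathcal J_\varphi$ exactly as in the derivation of \eqref{eq:Lq} (using that $|\nabla_N f|$ is constant on each level set $\varphi^{-1}(z)$) gives
\[
\int_M|\nabla(f\circ\varphi)|^p\,\mathrm{d}\mathrm{vol}_M\le L^p\int_{\varphi(M)}|\nabla_N f|^p\,\Psi\,\mathrm{d}\mathrm{vol}_N<\infty,
\]
while $\|f\circ\varphi\|_{L^p(M)}^p=\|f\|_{L^p(\varphi(M),\Psi\,\mathrm{d}\mathrm{vol}_N)}^p$ by \eqref{eq:Lq}. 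Thus $f\circ\varphi\in W^{1,p}(M)$; since $M$ is complete, $C^\infty_o(M)$ is dense in $W^{1,p}(M)$, so $W^{1,p}(M)=H^{1,p}(M)$ and $f\circ\varphi\in H^{1,p}(M)$. Combined with the hypothesis $f\in L^p(\varphi(M),\Psi\,\mathrm{d}\mathrm{vol}_N)$, this gives $f\circ\varphi\in H^{1,p}_\varphi(M)$.

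I expect the main obstacle to be the chain rule for $f\circ\varphi$ when $\varphi$ is only Lipschitz: one must guarantee that the composition is well defined and weakly differentiable even though $\varphi$ may collapse sets, which is exactly what \eqref{eq:Jphinz}, via the coarea pullback of null sets, and the reduction to the open set $U$ are meant to secure. Within this step, the most delicate point is the \emph{weighted} approximation $f_j\to f$ in $L^p(U,\Psi\,\mathrm{d}\mathrm{vol}_N)$ of both $f$ and $\nabla_N f$ by smooth functions, since the weight $\Psi$ may degenerate or blow up; I would carry this out locally and then combine it with the coarea transfer, so that the entire limiting argument ultimately takes place on $M$ rather than on the weighted manifold.
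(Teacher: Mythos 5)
Your proposal follows essentially the same route as the paper: closedness is obtained from the isometry \eqref{eq:Lq} (applied to differences, making $(f_k)$ Cauchy in $L^p(\varphi(M),\Psi\,d\mathrm{vol}_N)$), and the second assertion from the chain rule $|\nabla_M(f\circ\varphi)|\le C\,|\nabla_N f|\circ\varphi$ followed by the coarea formula, exactly as in the paper's displayed estimate. You in fact supply details the paper leaves implicit --- the $\mathrm{vol}_M$-nullity of $\varphi^{-1}(\partial\varphi(M))$, the smooth-approximation argument behind the chain rule (whose weighted-approximation step you rightly flag as the delicate point, which the paper subsumes under ``by the chain rule''), and the identification of $W^{1,p}(M)$ with the completion $H^{1,p}(M)$ via completeness of $M$ --- so the proof is correct and, if anything, more careful than the original.
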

\begin{proof}
If  a sequence $f_k\circ\varphi$ converges in $H^{1,p}(M)$, then by \eqref{eq:Lq} $f_k$ converges in $L^p(\varphi(M), \Psi d\mathrm{vol}_{N})$ to some $f$ and, also by \eqref{eq:Lq},  $f_k\circ\varphi$ converges to  $f\circ\varphi$ in $L^p(M)$. Thus the $H^{1,p}(M)$-limit of $f_k\circ\varphi$ is $f\circ\varphi$.
This proves the first assertion of the proposition. To verify the second assertion, note that, by the chain rule (applied under our assumption on $\varphi(M)$) we have a relation for $\nabla (f\circ\varphi)$, similar to \eqref{eq:Lq}, namely
\begin{align*}
\label{eq:nablaLq}
\int_M |\nabla_M(f\circ\varphi)|_M^p d\mathrm{vol}_M(x)\le C 
 \int_M |\nabla_Nf|_N^p\circ\varphi \;d\mathrm{vol}_M(x)=
 \\
 C\int_{\varphi(M)} |\nabla_Nf(z)|_N^p \Psi(z) d\mathrm{vol}_N(z)<\infty.
 \end{align*}	
\end{proof}
In what follows we will denote by $B$  balls in $M$ and by $B_N$ balls in $N$. 
For $r>0$ and an open set $A\subset M$ define
\begin{equation}
\label{eq:deltaalpha}
\delta_r(A)\eqdef\sup_{y\in M\setminus A,z\in \varphi(M)}
\dfrac{\int_{\varphi^{-1}(z)\cap B(y,r)}\mathcal J_\varphi(x)^{-1}\mathrm d\mathcal H_{m-n}(x)  }{\int_{\varphi^{-1}(z)}\mathcal J_\varphi(x)^{-1}\mathrm d\mathcal H_{m-n}(x)  }.
\end{equation}
Quantity $\delta_r(A)$ relates the volume of the portion of a level set $\varphi^{-1}(z)$ inside a small geodesic ball centered outside of a given set $A$, on one hand, to the volume of the whole level set, on the other. By definition it is monotone nonincreasing with respect to $A$. Below we connect compactness of Sobolev embeddings  to  $\delta_r(A)$ vanishing at infinity. 
\begin{definition} 
\label{def:delta-vanish}
Let $r>0$.
We shall say that the quantity \eqref{eq:deltaalpha} vanishes at infinity if there exists a countable exhaustion (i.e. monotone covering) $\{A_k\}_{k\in\N}$ of $M$ by open bounded sets such that $\delta_r(A_k)\to 0$ if $k\rightarrow \infty$.  
\end{definition}

\begin{lemma}
If $\delta_r$ vanishes at infinity then, for {\em any} countable exhaustion  $\{A_k\}_{k\in\N}$ of $M$ by open bounded sets, $\delta_r(A_k)\to 0$ if $k\rightarrow \infty$.
\end{lemma}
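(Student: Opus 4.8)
The plan is to show that the definition of ``$\delta_r$ vanishes at infinity'' is independent of the particular exhaustion used, by comparing an arbitrary exhaustion $\{A_k\}$ against the privileged one $\{A_j'\}$ guaranteed by the hypothesis (so that $\delta_r(A_j')\to 0$). The key structural fact is that $\delta_r(A)$ is monotone nonincreasing in $A$ with respect to inclusion, as already noted just before Definition~\ref{def:delta-vanish}. The whole argument reduces to a nesting/absorption statement: any bounded set is eventually swallowed by the members of any exhaustion.

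First I would fix an arbitrary countable exhaustion $\{A_k\}_{k\in\N}$ of $M$ by open bounded sets and let $\varepsilon>0$ be given. Using the hypothesis, choose the privileged exhaustion $\{A_j'\}$ with $\delta_r(A_j')\to0$, and pick an index $j_0$ with $\delta_r(A_{j_0}')<\varepsilon$. The set $\overline{A_{j_0}'}$ is compact (bounded and, after taking closure, closed in a complete manifold, hence compact by Hopf--Rinow). Since $\{A_k\}$ is a monotone covering of $M$, the sets $A_k$ form an open cover of the compact set $\overline{A_{j_0}'}$, so by compactness and monotonicity there exists $k_0$ with $A_{j_0}'\subset \overline{A_{j_0}'}\subset A_{k_0}$.

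By monotonicity of $\delta_r$ in the set argument, $A_{j_0}'\subset A_{k_0}$ gives $\delta_r(A_{k_0})\le \delta_r(A_{j_0}')<\varepsilon$, and since $\{A_k\}$ is increasing, $\delta_r(A_k)\le\delta_r(A_{k_0})<\varepsilon$ for all $k\ge k_0$. As $\varepsilon>0$ was arbitrary, this shows $\delta_r(A_k)\to0$, which is the desired conclusion.

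The only subtle point, and the step I would treat most carefully, is the containment of a bounded set from one exhaustion inside a member of the other. The cleanest route uses compactness of closed bounded subsets of a complete manifold (Hopf--Rinow) together with the exhaustion property: that one bounded set lies in \emph{some} member of any increasing open cover. Care is needed because the sets are merely open and bounded rather than, say, balls, so one should not assume any particular geometric shape; the compactness-and-finite-subcover argument, combined with monotonicity so that a finite union is dominated by a single later member, handles this uniformly. Everything else is a direct application of the monotonicity of $\delta_r$, which I am free to use as it was established immediately before the definition.
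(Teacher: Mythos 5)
Your proof is correct and follows essentially the same approach as the paper: both absorb a member of the privileged exhaustion into a member of the arbitrary one via compactness of its closure (Hopf--Rinow) plus monotonicity of the covering, then apply the monotonicity of $\delta_r$ twice. The only difference is organizational --- you run an $\varepsilon$-argument with a single privileged set, whereas the paper constructs an unbounded subsequence of indices and invokes monotonicity along it; these are interchangeable.
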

\begin{proof}
Let   $\{A_k\}_{k\in\N}$ be as in Definition~\ref{def:delta-vanish} and let $\{A'_k\}_{k\in\N}$ be another exhaustion of $M$ by open bounded sets. Since $\overline {A_k}$ is a compact set, it is covered by finitely many sets $A'_n$, and since the latter family is monotone, it is covered by some single set $A'_{n_k}$. By monotonicity of $\delta_r$ we have $\delta_r(A'_{n_k})\le \delta_r(A_{k})\to 0$. Note that the sequence $(n_k)$ is unbounded, since otherwise $M=\cup_{k\in\N} A_k$ would be contained in a bounded set. Then by monotonicity of  $\{A'_k\}_{k\in\N}$  and of $\delta_r$ we have $\delta_r(A'_{k})\to 0$.
\end{proof}
\begin{theorem}\label{thm:levels1}
 Let $M$ be a complete  connected non-compact smooth Riemannian $m$-dimensional manifold satisfying (M1)-(M2), let $N$ be a smooth $n$-dimensional Riemannian manifold, $n<m$. Let 
$\varphi:M\to N$ be a  Lipschitz map 
satisfying \eqref{eq:Jphinz} and \eqref{eq:mualpha}.

If, for some $r>0$, the quantity $\delta_r$ vanishes at infinity,  
then the subspace  $H^{1,p}_\varphi(M)$,  $p\in(1,m)$,  is compactly embedded into $L^q(M)$ for every 
$q\in(p,p^*)$.
\end{theorem}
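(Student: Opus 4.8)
The plan is to argue by contradiction through the spotlight lemma, realizing the heuristic of the introduction: non-compactness would force $u_k$ to retain a fixed amount of $L^1$-mass on a ball escaping to infinity, whereas the level-set structure of functions in $H^{1,p}_\varphi(M)$, combined with the vanishing of $\delta_r$, forbids this. By reflexivity of $H^{1,p}(M)$ it suffices, exactly as in the proof of Theorem~\ref{t:main}, to show that every sequence $(u_k)$ in $H^{1,p}_\varphi(M)$ with $u_k\rightharpoonup 0$ weakly in $H^{1,p}(M)$ satisfies $u_k\to 0$ in $L^q(M)$. I would fix an $(\varepsilon,\nu)$-discretization $\Gamma$ of $M$ with $\nu\varepsilon=r$, where $r$ is the radius for which $\delta_r$ vanishes at infinity. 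If the conclusion failed, then by Lemma~\ref{lem:spotlight} there would be $\delta>0$ and points $y_k\in\Gamma$ with, after passing to a subsequence,
\[
\int_{B(y_k,r)}|u_k|\,d\mathrm{vol}\ \ge\ \delta .
\]

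Next I would record the key coarea identity. Writing $u_k=f_k\circ\varphi$ and applying the coarea formula \eqref{eq:coarea} to $|f_k\circ\varphi|^{p^*}\,\mathcal J_\varphi^{-1}\,\chi_{B(y_k,r)}$, and using that $f_k$ is constant on each level set, gives
\[
\int_{B(y_k,r)}|u_k|^{p^*}\,d\mathrm{vol}=\int_N |f_k(z)|^{p^*}\left(\int_{\varphi^{-1}(z)\cap B(y_k,r)}\frac{\mathrm d\mathcal H_{m-n}}{\mathcal J_\varphi}\right) d\mathrm{vol}_N(z).
\]
The crucial point is to split at exponent $p^*$ rather than $1$: by the definition \eqref{eq:deltaalpha} of $\delta_r$, for any open bounded $A$ and any $y_k\notin A$ the inner integral is at most $\delta_r(A)\,\Psi(z)$, whence
\[
\int_{B(y_k,r)}|u_k|^{p^*}\,d\mathrm{vol}\ \le\ \delta_r(A)\int_N |f_k(z)|^{p^*}\Psi(z)\,d\mathrm{vol}_N(z)=\delta_r(A)\,\|u_k\|_{L^{p^*}(M)}^{p^*},
\]
the last equality being the coarea identity \eqref{eq:Lq} with exponent $p^*$ in place of $p$. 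Since $(u_k)$ is bounded in $H^{1,p}(M)$, hence in $L^{p^*}(M)$, the right-hand side is at most $C\,\delta_r(A)$.

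To close the argument I would combine this with H\"older's inequality on the ball,
\[
\int_{B(y_k,r)}|u_k|\,d\mathrm{vol}\ \le\ \Big(\int_{B(y_k,r)}|u_k|^{p^*}\,d\mathrm{vol}\Big)^{1/p^*}\mathrm{vol}(B(y_k,r))^{1-1/p^*},
\]
and the uniform upper bound $\mathrm{vol}(B(y_k,r))\le C_r$ supplied by (M1) (Bishop--Gromov, cf. Remark~\ref{rem:anyball}), obtaining $\int_{B(y_k,r)}|u_k|\,d\mathrm{vol}\le C'\,\delta_r(A)^{1/p^*}$ for every $y_k\notin A$. It remains to note that $(y_k)$ is discrete: since $u_k\rightharpoonup 0$, compactness of local Sobolev embeddings gives $u_k\to 0$ in $L^1$ on every fixed ball, so a bounded subsequence of $(y_k)$ — which, $\Gamma$ being a discretization, would be eventually constant — would violate the lower bound $\delta$. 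Fixing an exhaustion $\{A_j\}$ with $\delta_r(A_j)\to 0$, discreteness guarantees $y_k\notin A_j$ for all large $k$, so $\delta\le C'\,\delta_r(A_j)^{1/p^*}$ for each $j$; letting $j\to\infty$ yields $\delta\le 0$, a contradiction.

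The main obstacle is exactly the choice made in the second paragraph. A naive splitting at exponent $1$ would bound the ball integral by $\delta_r(A)\,\|u_k\|_{L^1(M)}$, but on a non-compact manifold the $L^1$-norm of an $H^{1,p}$-bounded sequence need not be finite, let alone uniformly bounded. Passing through $p^*$, where boundedness is delivered by the Sobolev embedding, and only afterwards descending to the $L^1$-mass demanded by the spotlight lemma via H\"older and the uniform volume bound, is what makes the estimate effective.
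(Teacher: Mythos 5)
Your proof is correct, and it reaches the conclusion by a genuinely different route than the paper, although both arguments share the same engine: applying the coarea formula \eqref{eq:coarea} to $|f_k|^s\,\mathcal J_\varphi^{-1}\chi_{B(y,r)}$ and using the definition \eqref{eq:deltaalpha} to obtain $\int_{B(y,r)}|u_k|^s\,d\mathrm{vol}_M\le\delta_r(A)\int_M|u_k|^s\,d\mathrm{vol}_M$ whenever $y\notin A$. The paper runs this with $s=q$ and proceeds directly, with no contradiction and without invoking Lemma~\ref{lem:spotlight} as a black box: it sums the local Sobolev inequalities over the points of $\Gamma\setminus A_\ell$ (reproducing the summation mechanism inside the spotlight lemma's proof) to get the uniform tail bound $\int_{M\setminus\tilde A_\ell}|u_k|^q\,d\mathrm{vol}_M\le C\,\delta_r(A_\ell)^{1-p/q}$, transfers this bound to the weak limit $u$ by lower semicontinuity of the norm, and finishes with the compactness of $H^{1,p}(\tilde A_\ell)\hookrightarrow L^q(\tilde A_\ell)$ on the bounded set $\tilde A_\ell$. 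You instead run the coarea step with $s=p^*$, reduce to weakly null sequences, and let Lemma~\ref{lem:spotlight} do the summation for you, closing with a contradiction; the price is two ingredients the paper's proof never needs: descending from $L^{p^*}$ to the $L^1$ ball mass by H\"older (any $s\in[p,p^*]$ would serve equally well here, so the endpoint embedding is inessential), and the uniform bound $\sup_{x\in M}\mathrm{vol}(B(x,r))<\infty$. Two small points to tighten, neither of which is a gap. First, that volume bound is indeed a consequence of (M1), but via Bishop's volume comparison, not via \eqref{eq:vol} of Remark~\ref{rem:anyball}, which only compares volumes of nearby balls and yields no absolute upper bound; the paper itself uses the same fact, attributed to (M1)-(M2), in the necessity part of Theorem~\ref{thm:groupsymm}. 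Second, your reduction ``by reflexivity'' to sequences $u_k\rightharpoonup 0$ in $H^{1,p}_\varphi(M)$ silently uses that the weak limit of a bounded sequence in $H^{1,p}_\varphi(M)$ again lies in $H^{1,p}_\varphi(M)$; this holds because the proposition preceding the theorem shows $H^{1,p}_\varphi(M)$ is a norm-closed linear subspace, hence weakly closed, and it is worth saying so explicitly (in Theorem~\ref{t:main} the analogous fact required a separate remark). With these attributions added your argument is complete and somewhat shorter than the paper's, while the paper's direct version buys a quantitative tail estimate valid for both $u_k$ and the limit $u$ and avoids any appeal to contradiction.
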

\begin{proof}
Assume that $u_k=f_k\circ\varphi\rightharpoonup u$ in $H^{1,p}_\varphi(M)$.
Let us fix $R>0$.	
Let $\Gamma$ be a $(\epsilon,r)$-discretization of $M$.  Similarly to the proof of Lemma~\ref{lem:spotlight},
\begin{align}\label{23.01-1}
\int_{B(y,r)}|u_{k}|^{q}& d\mathrm{vol}_M\le \\ \nonumber
&C\int_{B(y,r)}(|\nabla u_{k}|^{p}+|u_{k}|^{p})d\mathrm{vol}_M\left(\int_{B(y,r)}|u_{k}|^{q}d\mathrm{vol}_M\right)^{1-p/q}.
\end{align}
Let $\{A_\ell\}_{\ell\in\N}$ be a monotone covering of $M$ by 
  bounded domains with Lipschitz boundary and let
$\tilde A_\ell\eqdef\{x\in M: d(x,A_\ell)<r\}$. 

Adding the inequalities  \eqref{23.01-1} over all $y\in \Gamma\setminus A_\ell$, we have
\[
\int_{M\setminus \tilde A_\ell}|u_{k}|^{q}d\mathrm{vol}_M\le C \|u_k\|^p_{H^{1,p}(M)}
\left(\sup_{y\in \Gamma\setminus A_\ell}\int_{B(y,r)}|u_{k}|^{q}d\mathrm{vol}_M\right)^{1-p/q}.
\]	
Then, using \eqref{eq:coarea} and \eqref{eq:deltaalpha}, we get
\begin{align}
\int_{B(y,r)} & |u_{k}|^{q} d\mathrm{vol}_M = \\  
&\int_N\left[\int_{x\in \varphi^{-1}(z) \cap B(y,r) }|u_k(x)|^q\mathcal J_\varphi(x)^{-1}\mathrm d\mathcal H_{m-n}(x)  \right] d\mathrm{vol}_N(z) = \nonumber 
\end{align} 
\begin{align}
&\int_N |f_k(z)|^q\left[\int_{x\in \varphi^{-1}(z) \cap B(y,r) }\mathcal J_\varphi(x)^{-1}\mathrm d\mathcal H_{m-n}(x)  \right] d\mathrm{vol}_N(z) \le   \nonumber \\
& \delta_r(A_\ell)\int_N |f_k(z)|^q \left[\int_{x\in \varphi^{-1}(z)}\mathcal J_\varphi(x)^{-1}\mathrm d\mathcal H_{m-n}(x) \right] d\mathrm{vol}_N(z)  =  \nonumber \\
& \delta_r(A_\ell) \int_{M}|u_{k}|^{q}  d\mathrm{vol}_M \nonumber .
\end{align}
Now taking into account that $(u_k)$ is a bounded sequence in $H^{1,p}(M)$ and in consequence in any space $L^q(M)$ for $p\le q\le p^*$ 
we have
\begin{equation}
\int_{M\setminus \tilde A_\ell}|u_{k}|^{q}d\mathrm{vol}_M\le C\delta_r(A_\ell)^{1-p/q}.
\end{equation}
Passing to the weak limit and using weak semicontinuity of norms, we have the same estimate  for $u$,
and therefore, 
\begin{equation}
\int_{M\setminus \tilde A_\ell}|u_{k}-u|^{q}d\mathrm{vol}_M\le 2^{q-1}C\delta_r(A_\ell)^{1-p/q}.
\end{equation}
Let $H^{1,p}(\tilde{A}_\ell)$ be a Sobolev space on $\tilde{A}_\ell$ defined by restrictions. The  domain $\tilde{A}_\ell$ is bounded therefore the Sobolev embedding   $H^{1,p}(\tilde{A}_\ell)\hookrightarrow L^q(\tilde{A}_\ell)$ is compact. Moreover,  $u_k-u\rightharpoonup 0$ in $H^{1,p}(M)$ therefore we have
\begin{align*}
\limsup_{k\to\infty}\int_{M}|u_{k}-u|^{q}d\mathrm{vol}_M\le 
\limsup_{k\to\infty}\int_{M\setminus \tilde A_\ell}|u_{k}-u|^{q}d\mathrm{vol}_M
+
\\
\limsup_{k\to\infty}\int_{\tilde A_\ell}|u_{k}-u|^{q}d\mathrm{vol}_M
\le 2^{q-1}C\delta_r(A_\ell)^{1-p/q}
\end{align*} 
Since $\delta_r$ vanishes at infinity, by taking $\ell\to\infty$ we arrive at $u_k\to u$ in $L^q(M)$.
\end{proof}
Based on the example of equivalence of spaces $H_{O(m)}^{1,p}(\R^m)$ and  $H_{\varphi}^{1,p}(\R^m)$ with $\varphi(x)=|x|$, it could be natural to introduce a coercivity property of the map $\varphi$ by replacing group orbits in Definition~\ref{def:coerciveG} with level sets $\varphi^{-1}(\varphi(x))$, $x\in M$.
\begin{definition}
\label{def:coercive-phi}
 One shall say that a  continuous map $\varphi:M\to N$  is level-coercive if all its level sets are compact and for every $t>0$ the set 
\begin{equation}
M_t\eqdef\{x\in M: \mathrm{diam}_M\varphi^{-1}(\varphi(x))\le t\}
\end{equation}
is bounded in $M$.
\end{definition}
We use the term {\em level-coercive} because proper semibounded real-valued maps are often called in literature coercive. Level-coercivity is, on the other hand, a property not of a map, but of the equivalence classes of maps with same level sets. For example, an $\R\to(0,1]$-function $x\mapsto e^{-x^2}$ is level-coercive.
\begin{remark}
It may look plausible on the first glance that, like in the case of Theorem~\ref{thm:groupsymm}, level-coercivity of $\varphi$ would yield compactness of embeddings $H_{\varphi}^{1,p}(M)\hookrightarrow L^q(M)$, but this expectation ignores the fact that level bands of a smooth map may exhibit local "bulges" unseen in the case of orbit tubes. In particular, the compactness condition in Theorem~\ref{thm:levels1}, vanishing of $\delta_r$ at infinity, does not follow from level-coercivity, that is, from the condition that diameter of level sets tends to infinity at infinity, as, even in presence of level-coercivity, the ratio in \eqref{eq:deltaalpha} can concentrate at some $y=y(\alpha)$. Consider, for example, $M=\R^2$ with polar coordinates and $\varphi(r,\theta)=r(1+g(r^2\theta))$ for $r>2$, where $g$ is a  smooth function on $\R$ with  $\supp g= [-\pi,\pi]$. 
In order to be able to associate compactness, like in the case of symmetric functions,  with level-coercivity, one needs that level sets of $\varphi$ will have more resemblance to orbits of a compact group. To this end we require them to be compact and their level bands to remain comparably thick (in certain way) at different points.
\end{remark}
\begin{definition}
 We say that the map $\varphi$ has uniformly thick levels if  
there exist $\varepsilon>0$, $r>0$, and an open bounded set $A\subset M$ such that for almost every $z\in \varphi(M)$
  \begin{align}
  \label{eq:unithick}
 \inf_{y\in\varphi^{-1}(z)\setminus A} \int_{\varphi^{-1}(z)\cap B(y,r)}& 
 \frac{\mathrm d\mathcal H_{m-n}(x)}{\mathcal J_\varphi(x)} \ge \\
  & \varepsilon  \sup_{y\in\varphi^{-1}(z)\setminus A}  \int_{\varphi^{-1}(z)\cap B(y,r)}\frac{\mathrm d\mathcal H_{m-n}(x) }{\mathcal J_\varphi(x)} . \nonumber
  \end{align}  
\end{definition}
We draw the following consequence of condition \eqref{eq:unithick}.
\begin{lemma}
\label{lem:farball}
Assume that $\varphi: M\rightarrow N$ is a level-coercive  Lipschitz map 
satisfying \eqref{eq:Jphinz} and \eqref{eq:mualpha}. Moreover, assume that it
 has connected and  uniformly thick levels (i.e. satisfies \eqref{eq:unithick}).  Let $x_0\in M$ and $r>0$. Then for  any $q$, $1\le q <\infty$, we have  
\begin{equation}
\label{eq:farball}
\sigma_R\eqdef\sup_{x\in M\setminus B(x_0,R),\; h\in L^q_\mathrm{loc}(N), h\ge 0, h\neq 0}
\frac
{\int_{B(x,r)}(h\circ\varphi)^q\;d\mathrm{vol}_M}
{\int_{M}(h\circ\varphi)^q\;d\mathrm{vol}_M}\longrightarrow 0
\end{equation}
as $R\to\infty$. 
\end{lemma}
\begin{proof} 
Let $L$ be a Lipschitz constant of $\varphi$.
One can easily see that if $z\notin B_N(\varphi(x), L r)$ then $\varphi^{-1}(z) \cap B(x,r)=\emptyset$. So the coarea formula and \eqref{eq:mualpha}  imply that the integral  $\int_{B(x,r)}(h\circ\varphi)^q\;d\mathrm{vol}_M$ is finite if  $h\in L^q_\mathrm{loc}(N)$. 
If $h\circ \varphi\notin L^{q}(M)$ then the quotient  defining   $\sigma_R$ becomes $0$. So let us  fix $h$ such that $h\circ \varphi\in L^{q}(M)$ and  $h\ge 0$. Let $y_R\notin B(x_0,R)$, $R>0$, and let $z_R\eqdef \varphi(y_R)$.  
Note that $\mathrm{diam} \;\varphi^{-1}(z_R)\to\infty$ as $R\to\infty$, since if the diameters of level sets
$\varphi^{-1}(z_{R_k})$ were bounded on some sequence $R_k\to\infty$, then $d(y_{R_k},x_0)$ would be bounded by level-coercivity of $\varphi$, which contradicts $R_k\to\infty$. 
Thus, since the set $\varphi^{-1}(z_R)$ is connected, there exist points $y^1_R,\dots, y^{j_R}_R\in \varphi^{-1}(z_R)$, $j_R\in\N$, $j_R\to \infty$  as $R\to\infty$, such that the balls $B(y_R^{j},r)$, $j=1,\dots,j_R$, are pairwise disjoint. Indeed, observe first that there exists at least one ball of radius $r$ with a center on $\varphi^{-1}(z_R)$, namely  $B(y_R,r)$. Let $j_R$ be the maximal possible number of pairwise disjoint balls of radius $r$ with centers on $\varphi^{-1}(z_R)$.

Note that for every $R$ sufficiently large 
\begin{equation}
\label{eq:chain_balls}
\min_{j=1,\dots,j_{R}, j\neq i}d(y_{R}^i,y_{R}^j)\le 4r \mbox{ for every }i=1,\dots,j_{R}.
\end{equation}
Indeed, if it were false, then there would exist a $\delta>0$ and an $i$ such that 
$d(y_{R}^{i},y_{R}^j)\ge 4r+2\delta$ whenever $j\neq i$, so by connectedness of the level sets of $\varphi$, 
the boundary $\partial B(y_{R}^{i},2r+\delta)$ will intersect $\varphi^{-1}(z_{R})$ at some $y$, and $B(y,r$) will be disjoint from all $B(y_{R}^j,r)$), contradicting the assumption that $j_R$ is the maximal possible number of disjoint balls centered on $\varphi^{-1}(z_{R})$.

Then if, for some sequence $R_k\to\infty$, $j^*\eqdef \sup_{k\in\N} j_{R_k}<\infty$, then
 from \eqref{eq:chain_balls} would follow $\mathrm{diam}\;\varphi^{-1}(z_{R_k})\le 4rj^*$, which contradicts level-coercivity. Consequently,  for any  positive $h$ such that $h\circ \varphi\in L^q(M)$, using the definition of $j_R$ and \eqref{eq:unithick}, we have
\begin{align*}
\int_M(h\circ\varphi)^q &\;d\mathrm{vol}_M \ge \sum_{j=1}^{j_R}\int_{B(y^{j}_R,r)}(h\circ\varphi)^q\;d\mathrm{vol}_M\ge
\\
&j_R \; \min_{j=1,\dots,j_R}\int_{B(y^{j}_R,r)}(h\circ\varphi)^q\;d\mathrm{vol}_M
=
\\
& j_R \; \min_{j=1,\dots,j_R}
\int_N h(z)^q
\int_{\varphi^{-1}(z)\cap B(y^{j}_R,r)}
 \frac{\mathrm d\mathcal H_{m-n}(y) }{\mathcal J_\varphi(y)}\;d\mathrm{vol}_N(z)\ge
\\
 & j_R \;\varepsilon  \sup_{x\in M\setminus B(x_0,R)}  \int_N h(z)^q
\int_{\varphi^{-1}(z)\cap B(x,r)}
 \frac{\mathrm d\mathcal H_{m-n}(y)}{\mathcal J_\varphi(y)}\;d\mathrm{vol}_N(z)
\end{align*}
 which by \eqref{eq:coarea} gives \eqref{eq:farball} with $\sigma_R\le \frac{1}{\varepsilon j_R}\to 0$ as $R\to\infty$.
\end{proof}
We now can formulate a sufficient condition of compactness in terms of level-coercivity of $\varphi$. 
\begin{theorem}\label{thm:levels2}
 Let $M$ be a complete, connected, $m$-dimensional Riemannian  manifold satisfying (M1) - (M2), let $N$ be a  $n$-dimensional Riemannian manifold, $n<m$. Let  $\varphi:M\to N$ be a   Lipschitz-continuous map satisfying \eqref{eq:Jphinz} and \eqref{eq:mualpha}.
 Assume that $\varphi$ is uniformly thick (i.e. satisfies \eqref{eq:unithick}) and that all level sets of $\varphi$ are connected.

Then, if $\varphi$ is level-coercive,  the subspace  $H^{1,p}_\varphi(M)$, $p\in(1,m)$  is compactly embedded into $L^q(M)$ for every 
$q\in(p,p^*)$. Conversely, if $H^{1,p}_\varphi(M)$, $p\in(1,m)$  is compactly embedded into $L^q(M)$ for some $q\in(p,p^*)$ and injectivity radius of $N$ is positive, then $\varphi$ is level-coercive.
\end{theorem}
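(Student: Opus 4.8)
The plan is to prove the two implications separately: sufficiency follows essentially for free from Lemma~\ref{lem:farball} by reducing to Theorem~\ref{thm:levels1}, while necessity is proved by contraposition through an explicit escaping sequence, in the spirit of the necessity part of Theorem~\ref{thm:groupsymm}.

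\emph{Sufficiency.} Since $\varphi$ is level-coercive and, by the standing hypotheses, Lipschitz, satisfies \eqref{eq:Jphinz}--\eqref{eq:mualpha}, and has connected, uniformly thick levels, Lemma~\ref{lem:farball} applies and gives $\sigma_R\to0$ as $R\to\infty$. I would feed this into the argument of Theorem~\ref{thm:levels1}. Let $u_k=f_k\circ\varphi\rightharpoonup u$ in $H^{1,p}_\varphi(M)$; applying Lemma~\ref{lem:farball} with $h=|f_k|$ (note $|u_k|\in L^q(M)$ by boundedness) gives, for every $y\in M\setminus B(x_0,R)$,
\[
\int_{B(y,r)}|u_k|^q\,d\mathrm{vol}_M=\int_{B(y,r)}(|f_k|\circ\varphi)^q\,d\mathrm{vol}_M\le \sigma_R\,\|u_k\|_{L^q(M)}^q .
\]
This is exactly the estimate on $\int_{B(y,r)}|u_k|^q$ that the factor $\delta_r(A_\ell)\int_M|u_k|^q$ supplies in the proof of Theorem~\ref{thm:levels1}. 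Hence that proof carries over verbatim with $A_\ell=B(x_0,R)$ and $\sigma_R$ in place of $\delta_r(A_\ell)$: summing the ball-wise Sobolev inequality \eqref{23.01-1} over a discretization, using the uniform multiplicity of the covering and boundedness of $(u_k)$ in $L^{p^*}$, one obtains $\int_{M\setminus\tilde A_\ell}|u_k-u|^q\le C\sigma_R^{1-p/q}$ uniformly in $k$, while Rellich on the bounded core $\tilde A_\ell$ kills the rest; letting $R\to\infty$ yields $u_k\to u$ in $L^q(M)$. (Equivalently, one checks that $\sigma_R\to0$ forces $\delta_r$ to vanish at infinity and quotes Theorem~\ref{thm:levels1} directly.)

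\emph{Necessity.} I would argue by contraposition, constructing from the failure of level-coercivity a bounded sequence in $H^{1,p}_\varphi(M)$ that is not precompact in $L^q(M)$. I would first reduce to the case that $M_t$ is unbounded for some $t>0$: an unbounded connected uniformly thick level set carries, as in \eqref{eq:chain_balls}, infinitely many disjoint $r$-balls of comparable positive thickness and hence has $\Psi=\infty$, so by \eqref{eq:mualpha} non-compact levels are confined to a null set of values and do not affect $H^{1,p}_\varphi(M)$; the operative obstruction is therefore the diameter condition. Thus there is a discrete sequence $(x_k)$ with $d(x_k,x_0)\to\infty$ and $\mathrm{diam}_M\varphi^{-1}(z_k)\le t$, where $z_k=\varphi(x_k)$. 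Using that the injectivity radius of $N$ is positive, fix $\rho\in(0,\mathrm{inj}(N))$, put $f_k=[\rho-d_N(\cdot,z_k)]_+$ and $u_k=f_k\circ\varphi$. By \eqref{eq:Lq} and the chain rule, the norms reduce to the pushforward measure $\mu=\varphi_*\mathrm{vol}_M$, i.e. $\mu(E)=\mathrm{vol}_M(\varphi^{-1}(E))=\int_E\Psi\,d\mathrm{vol}_N$:
\[
\|u_k\|_{H^{1,p}(M)}^p\le C\,\mu\big(B_N(z_k,\rho)\big),\qquad
\|u_k\|_{L^q(M)}^q\ge (\rho/2)^q\,\mu\big(B_N(z_k,\rho/2)\big).
\]
Passing to a subsequence along which the $z_k$ are $\rho$-separated in $N$ makes the bands $\varphi^{-1}(B_N(z_k,\rho/2))$ pairwise disjoint, so $u_k\rightharpoonup0$ and no $L^q$-Cauchy subsequence exists; the residual case $z_k\to z_*$ produces levels with components escaping to infinity near $z_*$ and is treated similarly.

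\emph{Main obstacle.} The crux is to control $\mu(B_N(z_k,\rho))$ uniformly in $k$: to bound it \emph{above} (keeping $(u_k)$ bounded in $H^{1,p}$) and to have the doubling $\mu(B_N(z_k,\rho/2))\ge c\,\mu(B_N(z_k,\rho))$ together with a positive lower bound (keeping $\|u_k\|_{L^q}$ away from $0$). Because $q>p$, an uncontrolled band volume is fatal: if $\mu(B_N(z_k,\rho))\to\infty$ then, after normalizing to $\|u_k\|_{H^{1,p}}=1$, one gets $\|u_k\|_{L^q}^q\gtrsim \mu(B_N(z_k,\rho/2))\,\mu(B_N(z_k,\rho))^{-q/p}\to0$, and the construction collapses. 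This is precisely the ``bulge'' phenomenon flagged in the remark after Theorem~\ref{thm:levels1}, and uniform thickness \eqref{eq:unithick} is the tool that excludes it: since $x_k\to\infty$ eventually escapes the fixed set $A$ of \eqref{eq:unithick}, the comparability of local thicknesses along each level, combined with $\mathrm{diam}\,\varphi^{-1}(z_k)\le t$ and the volume bounds from (M1)--(M2) on $t$-balls, should confine the band's mass to a neighborhood of $\varphi^{-1}(z_k)$ and pin $\mu(B_N(z_k,\rho))$ between two positive constants. Making this quantitative---transferring uniform thickness from the single level $z_k$ to the whole family of levels filling $B_N(z_k,\rho)$---is the step I expect to demand the most care.
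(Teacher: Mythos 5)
Your sufficiency argument is exactly the paper's: Lemma~\ref{lem:farball} gives $\sigma_R\to 0$, and this is fed into the machinery of Theorem~\ref{thm:levels1}; that half is fine. The problem is the necessity half. You choose the same test functions as the paper, $u_k=[\rho-d_N(\cdot,z_k)]_+\circ\varphi$, but your proof stops at what you yourself call the ``main obstacle'': a uniform upper bound on the band volume $\mu(B_N(z_k,\rho))=\int_{B_N(z_k,\rho)}\Psi\,d\mathrm{vol}_N$, without which $(u_k)$ need not be bounded in $H^{1,p}(M)$, or even belong to it (the pointwise bounds $|u_k|\le\rho$, $|\nabla u_k|\le L$ only give $\|u_k\|_{H^{1,p}}^p\lesssim \mathrm{vol}_M\left(\varphi^{-1}(B_N(z_k,\rho))\right)$, and \eqref{eq:mualpha} says $\Psi<\infty$ a.e., not that it is locally integrable, let alone uniformly so near the $z_k$). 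You conjecture that uniform thickness closes this gap but do not prove it, so the necessity direction is not established. A second genuine defect is your disposal of the ``residual case'' $z_k\to z_*$: the levels are connected by hypothesis, so there are no ``components escaping to infinity''; and when $z_k\to z_*$ the functions $f_k$ converge uniformly to $f_*=[\rho-d_N(\cdot,z_*)]_+$, the bands $\varphi^{-1}(B_N(z_k,\rho/2))$ all contain common level sets, disjointness fails, and $(u_k)$ may well converge in $L^q(M)$ --- so this case is not ``treated similarly'' and needs its own argument. Finally, your reduction of non-compact levels via $\Psi=\infty$ only confines them to a null set of values in $N$; it does not produce a non-precompact sequence for such $\varphi$, hence it does not prove the literal claim that compactness of the embedding forces \emph{all} levels to be compact.

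For calibration against the paper: the paper takes the same route but gets the $L^q$ lower bound much more cheaply than you do --- no doubling of $\mu$ is needed, since Lipschitz continuity of $\varphi$ gives $B(x_k,\delta/(2L))\subset\varphi^{-1}(B_N(z_k,\delta/2))$, on which $u_k\ge\delta/2$, and (M1)--(M2) bound the volume of such balls from below uniformly in $k$. On the other hand, the paper then simply \emph{asserts} that $(u_k)$ is bounded in $H^{1,p}(M)$ and that weak limit points vanish because ``the support is of bounded diameter''; but the support of $u_k$ is the band $\varphi^{-1}(B_N(z_k,\delta))$, whose diameter and volume are not controlled by $\mathrm{diam}\,\varphi^{-1}(z_k)\le t$. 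So the obstruction you flagged is real and is present, unacknowledged, in the paper's own proof; your diagnosis is sound, but a proposal that ends by naming the missing estimate rather than proving it leaves the necessity direction open.
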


\begin{proof} 
\textit{Sufficiency.} By Lemma~\ref{lem:farball}, for any $a>0$
\begin{equation}
\label{eq:localvanish}
\sup_{\stackrel{x\in M\setminus B(x_0,R),}{u\in H_\varphi^{1,2}(M), \|u\|_q\le a}}\int_{B(y,r)}|u|^{q}d\mathrm{vol}_M
\le a^q\sigma_R\to 0\mbox{ as } R\to \infty.
\end{equation}	
Applying this relation to a sequence $u_k\rightharpoonup 0$ with $a=\sup_{k\in\N}\|u_k\|_q$, one may complete the argument exactly as in the proof of Theorem~\ref{thm:levels1}.

\textit{Necessity.} Assume now that $\varphi$ is not level-coercive. 

Let $\delta\in (0,i(N))$, $z_k\in N$ and let $f_k(z)\eqdef (\delta-d_N(z,z_k))_+$, $u_k\eqdef f_k\circ\varphi$. Since the sequence $(f_k)$ is uniformly Lipschitz on $N$, and $\varphi$ is Lipschitz, the sequence $(u_k)$ is uniformly Lipschitz on $M$. 
Since $\varphi$ is not level-coercive, there exists a sequence $z_k\in N$ and $x_k\in\varphi^{-1}(z_k)$
such that $(x_k)$ is discrete and $\mathrm{diam} \;\varphi^{-1}(z_k)$ is bounded. With such choice of $z_k$, taking into account that $|\nabla_Nd_N(z,z_k)|=1$ and $\varphi$ is Lipschitz, the sequence $(u_k)$  is bounded in $H^{1,p}(M)$. Furthermore, its weak  limit point in $H^{1,p}(M)$  vanishes  since its support is of bounded diameter and contains a discrete sequence $(x_k)$.  In order to prove necessity in the theorem it suffices now to show that  none subsequence of $u_k$ does not converge to zero in $L^q(M)$. This would follow once we show that 
$\mathrm{vol}_M(\varphi^{-1}(B_N(z_k,\delta/2))$ is bounded away from zero. Indeed, 
since $\varphi$ is Lipschitz, the set $\varphi^{-1}(B_N(z_k,\delta/2))$ contains a ball $B(x_k,\rho)$ with some $\rho>0$ independent of $k$,
whose measure is bounded away from zero as a consequence of (M1)-(M2). 
\end{proof}
%
%
\begin{remark}
\label{rem:excision}
Note that the assertions of Theorems~\ref{thm:levels1} and \ref{thm:levels2} remain valid if on some bounded subset of $M$ function $\varphi$ is continuous rather than Lipschitz continuous.  
\end{remark}
 \begin{example} Let $M$ be the $m$-dimensional Euclidean space, $1<m$,  and let $N=\R$,   both equipped   with the Euclidean metric. Let $1\le\ell\le\infty$ and $\varphi(x)\eqdef(\sum_{i=1}^m|x_i|^\ell)^{1/\ell}$ unless $\ell=\infty$ and $\varphi(x)\eqdef\max_{i=1,\dots,m}|x_i|$.  
	Then the space $H_\varphi^{1,p}(\R^m)$ consists of $\ell$-radial functions. Then the embedding $H_\varphi^{1,p}(\R^m)\hookrightarrow L^{q}(\R^m)$, $p\in (1,m)$, $q\in (p,p^*)$,  is  compact.
\end{example}
\begin{example} 
\label{ex:1}
Let  $M$ be a complete Riemannian manifold satisfying (M1) - (M2) with a pole at $x_o\in M$, i.e. the map $\exp_{x_o}:T_{x_o}M\rightarrow M$ is a diffeomorphism and $\varphi$ from $M$ to $\R$ be given by  $\varphi(x)=d_M(x_o,x)$ (concerning singularity at $x_o$ cf. Remark~\ref{rem:excision}). In particular, $M$ may be a Cartan-Hadamard manifold i.e. simple-connected manifold that has everywhere non-positive sectional curvature. Then $H_\varphi^{1,p}(M)$ is compactly embedded into $L^{q}(M)$, $p\in(1,m)$, $q\in(p,p^*)$. 
\end{example}
\begin{example} 
	\label{ex:2}
	\item Let $M$ be the $m$-dimensional Euclidean space 
and let $N=\R^n$, $1<n<m$, both equipped   with the Euclidean metric. Let us 
        represent $M=R^m= \R^{\gamma_1}\times \ldots \times \R^{\gamma_n}$ with $\gamma_i\ge 2$. Let
  $r_i\in[1,\infty]$ and let $|\xi|_{r_i}\eqdef \left(\sum_{j=1}^{\gamma_i}|\xi_j|^{r_i}\right)^\frac{1}{r_i}$, $\xi\in\R^{\gamma_i}$ if $r_i<\infty$ and  $|\xi|_{r_i}\eqdef\max_{j=1,\dots,\gamma_i}|\xi_j|$, $\xi\in\R^{\gamma_i}$, if $r_i=\infty$. Let 
       $\varphi(x_1,\dots,x_\nu)=(|x_1|_{r_1},\dots,|x_\nu|_{r_n})$, $x_i\in\R^{\gamma_i}$, $i=1,\dots,n$.   
   Then $H_\varphi^{1,p}(\R^m)$ is compactly embedded into $L^{q}(\R^m)$, $p\in(1,m)$, $q\in(p,p^*)$. 
\end{example}
\begin{example} Let $M$ be the $m$-dimensional Euclidean space 
and let $N=\R^n$, $1<n<m$, both equipped   with the Euclidean metric. Let us 
        represent $M=R^m= \R^{\gamma_1}\times \ldots \times \R^{\gamma_{n-1}}\times \R$ with $\gamma_i\ge 2$, $i=1,\dots,n-1$ . Let
  $r_i\in[1,\infty]$ and let $|\xi|_{r_i}$, $i=1,\dots,n-1$, be as in Example~\ref{ex:2}. Let 
       $\varphi(x_1,\dots,x_n)=(|x_1|_{r_1},\dots,|x_{n-1}|_{r_{n-1}},|x_n|)$, $x_i\in\R^{\gamma_i}$, $i=1,\dots,n-1$.   
   Then the embedding $H_\varphi^{1,p}(\R^m)\hookrightarrow L^{q}(\R^m)$, $p\in(1,m)$, $q\in(p,p^*)$,  is not compact.\end{example}

\section{Extension of compact sets by order}

One can extend the compact subset in $L^q$-spaces  by order not loosing the compactness. 
This was observe in \cite{SS} where the author study subsets consisted of subradial functions belonging to Besov spaces defined on $\R^n$. Here we formulate more general approach for first order Sobolev spaces defined on manifolds. 
\begin{definition} 
	Let $X$ be a $\sigma$-finite metric measure space and let $E$ be a Banach space continuously embedded into $L^q(X)$ for some $q\in(1,\infty)$.  Let   $K\subset E$ be a bounded set in $E$ that is  relatively compact in $L^q(X)$. We say that the  a bounded set  $\tilde K\subset E$ is dominated by $K$ at infinity if there exist a ball $B(x,R)$ in $X$ and a constant $b>0$ such that for any function $u\in \tilde K$ there exists a function $f\in K$ such that $|u(x)|\le b f(x)$ a.e. in $X\setminus B(x_o,R)$ . 	
\end{definition}

\begin{theorem} \label{thm:extord}
	Let $M$ be a complete,  $m$-dimensional, connected, non-compact   Riemannian manifold satisfying (M1) and (M2). Let  $K\subset H^{1,p}(M)$ be  relatively compact in $L^q(M)$,  $p\in[1,m)$ and  $q\in (p,p^*)$. If   $\tilde K$ is  a bounded set in $ H^{1,p}(M)$ dominated by $K$ at infinity, then  $\tilde K$ is also relatively compact in $L^q(M)$. 
\end{theorem}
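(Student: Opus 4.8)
The plan is to reduce relative compactness of $\tilde K$ in $L^q(M)$ to two ingredients that fit together via a cutoff at infinity: local compactness on bounded balls, coming from the Rellich-type compactness of Sobolev embeddings on bounded domains already used in this paper, and a uniform smallness of the tails $\int_{M\setminus B(x_o,R)}|u|^q\,d\mathrm{vol}$ over $u\in\tilde K$. The latter is the crux, and it will be obtained by transferring the corresponding property of $K$ through the domination hypothesis.

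First I would record that relative compactness of $K$ in $L^q(M)$ forces its tails to be uniformly small: for every $\varepsilon>0$ there is $R_\varepsilon>0$ with
\[ \sup_{f\in K}\int_{M\setminus B(x_o,R_\varepsilon)}|f|^q\,d\mathrm{vol}<\varepsilon. \]
Indeed, by total boundedness one covers $K$ in $L^q(M)$ by finitely many balls of radius $\varepsilon^{1/q}$ centered at functions $f_1,\dots,f_n$; each $f_i\in L^q(M)$ satisfies $\int_{M\setminus B(x_o,R)}|f_i|^q\,d\mathrm{vol}\to0$ as $R\to\infty$ by absolute continuity of the integral, and the triangle inequality in $L^q(M\setminus B(x_o,R))$ yields the claim for a suitable $R_\varepsilon$. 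I would then transfer this to $\tilde K$: letting $B(x_o,R_0)$ and $b>0$ be the ball and constant from the domination hypothesis, for $u\in\tilde K$ choose $f\in K$ with $|u|\le bf$ a.e.\ on $M\setminus B(x_o,R_0)$, so that for every $R\ge R_0$,
\[ \int_{M\setminus B(x_o,R)}|u|^q\,d\mathrm{vol}\le b^q\int_{M\setminus B(x_o,R)}|f|^q\,d\mathrm{vol}. \]
Combined with the uniform tail bound for $K$, this gives, for every $\varepsilon>0$, some $R'_\varepsilon\ge R_0$ with $\sup_{u\in\tilde K}\int_{M\setminus B(x_o,R'_\varepsilon)}|u|^q\,d\mathrm{vol}<b^q\varepsilon$.

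Finally I would combine tail control with local compactness. Fix an exhaustion of $M$ by balls $B(x_o,R_j)$, $R_j\to\infty$. On each such ball $\tilde K$ is, by restriction, bounded in $H^{1,p}(B(x_o,R_j))$, so by compactness of the local Sobolev embedding $H^{1,p}\hookrightarrow L^q$ on bounded domains (valid since $q<p^*$, and used already in this paper), any sequence $(u_k)\subset\tilde K$ has a subsequence converging in $L^q(B(x_o,R_j))$; a diagonal argument produces a single subsequence converging in $L^q(B(x_o,R_j))$ for every $j$. Given $\varepsilon>0$, choosing $R_j\ge R'_\varepsilon$ and splitting $M=B(x_o,R_j)\cup(M\setminus B(x_o,R_j))$, the triangle inequality gives $\limsup_{k,l}\|u_k-u_l\|_{L^q(M)}\le 2b\,\varepsilon^{1/q}$; letting $\varepsilon\to0$ shows the subsequence is Cauchy in $L^q(M)$, whence $\tilde K$ is relatively compact in $L^q(M)$.

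The main obstacle is the first step, namely deducing the uniform tail smallness (tightness) of $K$ from its $L^q$-compactness and pairing it cleanly with the pointwise domination inequality; the local compactness and the diagonal/Cauchy assembly are routine. A minor technical point is ensuring the local embedding compactness applies on geodesic balls, which is handled by enlarging $B(x_o,R_j)$ to a relatively compact open set with Lipschitz boundary, or directly from the interior compactness already invoked earlier in the paper.
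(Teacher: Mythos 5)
Your proof is correct, but you handle the tail term by a genuinely different mechanism than the paper does. The paper fixes a uniformly locally finite covering of $M$ with a subordinate partition of unity, treats the finitely many pieces meeting the domination ball by local compactness, and for the far part works with the dominating functions $f_k\in K$ satisfying $|u_k|\le b f_k$ off the ball: it extracts an $L^q$-convergent subsequence of $(f_k)$ (possible since $K$ is relatively compact), invokes the partial converse of the dominated convergence theorem (\cite[Theorem 4.9]{Brezis}) to obtain a single $h\in L^q(M)$ dominating $|f_k|$ along a further subsequence, and then concludes via a.e.\ convergence of a subsequence of $(u_k)$ together with Lebesgue's dominated convergence theorem. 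You instead prove a uniform quantitative statement: relative compactness of $K$ in $L^q(M)$ forces uniform smallness of its tails (total boundedness plus vanishing tails of the finitely many centers), the pointwise domination transfers this tightness to $\tilde K$, and compactness follows from local compactness of Sobolev embeddings plus a diagonal/Cauchy assembly. Your route is more elementary, needing neither the Brezis extraction lemma nor any a.e.\ convergence argument, and it isolates the clean abstract criterion that local compactness together with uniform tail smallness implies compactness in $L^q$; what the paper's route buys is that it never establishes tightness of $K$ explicitly, working purely at the level of subsequences and dominated convergence. Both arguments rest on the same local compactness of Sobolev embeddings invoked throughout the paper, and the technical point you flag about geodesic balls is resolved exactly as you suggest (the paper sidesteps it by using cutoffs supported in small balls).
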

\begin{proof}
	Let $\{B(x_i,r)\}_{i\in \N}$, $r<r(M)$ be a uniformly locally finite covering of $M$.  Let $\{\varphi_i)\}_{i\in \N}$ be a resolution of unity subordinated to the covering.  Let $(u_k)$ be a sequence in $\tilde K$ and let $I=\{i: B(x_i,r)\cap B(x,R) \not= \emptyset\}$.  The set $I$ is finite since the ball $B(x,R)$ is relatively compact in $M$ and the covering is uniformly finite. 
	Any sequence $(\varphi_i u_k)_k$ has a convergent subsequence in $L^q(M)$, so we can choose a renamed subsequence $u_k$  such that the sequence 
	$\sum_{i\in I} \varphi_i u_k$ is convergent in $L^q(M)$. 
	
	On the other hand by definition of the set $\tilde K$  there exists a sequence $(f_k)$ in $K$ such that $|u_k|\le bf_k$ a.e. on $M\setminus B(x,R)$ for every $k$. 
	So 
	\[ | u_k(x) | = \big| \sum_{i\in \N\setminus I} \varphi_i(x) u_k(x) \big| \le f_k(x) \qquad \text{for a.e.} x\in  M. \]
	Consider a renamed convergent subsequence of $(f_k)$ in $L^q(M)$ and let $f$ be its strong  limit.  Then there exists a function $h\in L^q(M)$ and a renamed subsequence such that $|f_k|\le h$ (see \cite[Theorem~4.9]{Brezis}).
	Then $| u_k|\le bh$. Furthermore, by compactness of local Sobolev embeddings and $\sigma$-finiteness of $M$, a renamed subsequence of $( u_k)$ converges almost everywhere in $M$. Thus, by Lebesgue dominated convergence theorem $( u_k)$ converges in $L^q(M)$.
	Combining the local part with the part at infinity we get that $\tilde K$ is relatively compact. 
\end{proof}

\begin{lemma}
	\label{lem:av}
	 Let $G$ be a compact group of isometries on a Riemannian manifold $M$.
Let $dg$ be the Haar measure on $G$ normalized to $1$.
Then for every $p\in(1,\infty)$ expression $T_Gf(x)\eqdef \int_Gf(\eta x)dG$. 
defines a bounded projection operator of $H^{1,p}(M)$ onto $H_G^{1,p}(M)$  of the norm $1$.	
\end{lemma}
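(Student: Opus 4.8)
The plan is to verify three things for the operator $T_Gf(x)=\int_G f(\eta x)\,dg$: that it maps $H^{1,p}(M)$ into itself with norm at most $1$, that its range is exactly $H^{1,p}_G(M)$, and that it is a projection (i.e. $T_G^2=T_G$, equivalently $T_G$ fixes $G$-invariant functions). The guiding principle throughout is the Minkowski integral inequality, which lets one pull the $L^p$-type norm inside the Haar average, together with the fact that each $\eta\in G$ is an isometry and hence acts as an $L^p$-isometry on both $u$ and its gradient.

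First I would establish boundedness with norm $1$. For $u\in C_o^\infty(M)$ one has, by the Minkowski inequality for the $H^{1,p}$-norm,
\[
\|T_Gu\|_{H^{1,p}(M)}=\Big\|\int_G u(\eta\,\cdot)\,dg\Big\|_{H^{1,p}(M)}\le \int_G \|u(\eta\,\cdot)\|_{H^{1,p}(M)}\,dg .
\]
Since $\eta$ is an isometry of $M$, the change of variables $x\mapsto \eta x$ preserves the Riemannian measure and satisfies $|\nabla(u\circ\eta)|=|\nabla u|\circ\eta$, so $\|u(\eta\,\cdot)\|_{H^{1,p}(M)}=\|u\|_{H^{1,p}(M)}$ for every $\eta$. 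As the Haar measure has total mass $1$, the right-hand side equals $\|u\|_{H^{1,p}(M)}$, giving $\|T_G u\|_{H^{1,p}}\le\|u\|_{H^{1,p}}$. By density of $C_o^\infty(M)$ this extends to all of $H^{1,p}(M)$, so $T_G$ is bounded with norm $\le 1$; taking $u$ already $G$-invariant and nonzero shows the norm equals $1$.

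Next I would check that $T_G u$ is $G$-invariant, i.e. that $T_G u\in H^{1,p}_G(M)$. For fixed $g'\in G$, left-invariance of the Haar measure gives
\[
(T_Gu)(g'x)=\int_G u(\eta g' x)\,dg=\int_G u(\eta x)\,dg=(T_Gu)(x),
\]
after the substitution $\eta\mapsto \eta (g')^{-1}$. Hence the range of $T_G$ lies in $H^{1,p}_G(M)$. Conversely, if $u$ is already $G$-invariant then $u(\eta x)=u(x)$ for all $\eta$, so $T_Gu(x)=\int_G u(x)\,dg=u(x)$; thus $T_G$ fixes $H^{1,p}_G(M)$ pointwise. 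These two facts together show $T_G^2=T_G$ and that the range is precisely $H^{1,p}_G(M)$, completing the proof that $T_G$ is a norm-one projection onto $H^{1,p}_G(M)$.

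I expect the only genuine subtlety to be the justification that $T_Gu$ is a well-defined element of $H^{1,p}(M)$ and that differentiation commutes with the Haar average, so that $\nabla(T_Gu)=\int_G \nabla(u\circ\eta)\,dg$; this is what underlies the Minkowski inequality step above and is cleanest to handle on the smooth dense class $C_o^\infty(M)$ before passing to the limit. One must also note measurability of $\eta\mapsto u(\eta x)$, which follows from continuity of the $G$-action. Everything else reduces to the isometry invariance of the measure and gradient norm, which is routine.
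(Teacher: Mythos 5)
Your proof is correct and takes essentially the same approach as the paper: the norm-one bound comes from averaging over the probability Haar measure and the isometric action of $G$ --- the paper does this via pointwise Jensen's inequality for $t\mapsto|t|^p$ followed by Fubini, while you use Minkowski's integral inequality (the triangle inequality for the Bochner integral), which is the same convexity argument in different packaging --- and your explicit verification of $G$-invariance of $T_Gu$ and idempotence of $T_G$ merely fills in what the paper's proof leaves implicit. One small correction: the substitution $\eta\mapsto\eta(g')^{-1}$ uses \emph{right}-invariance of the Haar measure (valid here because compact groups are unimodular), not left-invariance as you stated.
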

\begin{proof}
	Indeed, by Jensen's inequality and the isometric action of $G$ we have
\begin{align*}
	\int_M &(|\nabla T_Gf (x)|^p+|T_Gf(x)|^p)d\mathrm{vol}\le \\
	&\qquad\int_M \int_G (|\nabla{g x}f( g x)|^p+|f(g x)|^p)\,dg)\,d\mathrm{vol}=
	\\
	&\qquad\int_G \int_M (|\nabla f( g x)|^p+|f(g x)|^p)d\mathrm{vol}\,dg=\|f\|_{H^{1,p}(M)}^p.
\end{align*}
The value $1$ of the norm is attained on every function from the subspace $H_G^{1,p}(M)$. 	
\end{proof}
The following corollary provides compactness of a set of quasisymmetric functions in the sense similar to to \eqref{eq:qsym}.
\begin{corollary}
\label{cor:extorder}
 Let $M$ be a connected, complete, non-compact Riemannian manifold satisfying (M1) and (M2), and let $G$ be a compact, connected and coercive group of isometries on $M$. 
Let $\lambda>1$ and let $B\subset M$ be a compact set in $M$.
	Let $K$ be a bounded subset of $H^{1,p}(M)$, $p\in(1,N)$, consisting of functions satisfying
	\begin{equation}\label{eq:qua-s}
	|f(g x)|\le \lambda|f(x)| 	
	\end{equation} 	
for any $x\in M\setminus B$ and $g\in G$. Then the set $K$ is compact in $L^q(M)$ for every $q\in(p,p^*)$.
  \end{corollary}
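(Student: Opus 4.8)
The plan is to reduce the statement to Theorem~\ref{thm:extord} by producing, for the given set $K$, a relatively compact dominating family built from the $G$-symmetrizations of the functions in $K$. The quasisymmetry bound \eqref{eq:qua-s} says precisely that $|f|$ varies by at most a factor $\lambda$ along each orbit that avoids $B$, and this is exactly what is needed to control $|f|$ from above by a $G$-invariant function to which Theorem~\ref{thm:groupsymm} applies.

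Concretely, I would first set $F_f\eqdef T_G(|f|)$, where $T_G$ is the averaging projection of Lemma~\ref{lem:av}. Since $f\in H^{1,p}(M)$ implies $|f|\in H^{1,p}(M)$ with $\||f|\|_{H^{1,p}}\le\|f\|_{H^{1,p}}$ (because $|\nabla|f||\le|\nabla f|$ almost everywhere), and since $T_G$ has norm $1$, the family $K_0\eqdef\{F_f:f\in K\}$ is a bounded subset of the $G$-invariant space $H^{1,p}_G(M)$. As $G$ is compact, connected and coercive, Theorem~\ref{thm:groupsymm} gives that $H^{1,p}_G(M)$ is compactly embedded into $L^q(M)$, and hence $K_0$ is relatively compact in $L^q(M)$.

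Next I would establish the pointwise domination. Let $B(x_o,R)$ be a ball containing the saturation $GB=\{gx:g\in G,\ x\in B\}$, which is compact since both $G$ and $B$ are compact. If $x\notin B(x_o,R)$, then the entire orbit $Gx$ lies outside $B$; applying \eqref{eq:qua-s} at the point $gx$ with the group element $g^{-1}$ yields $|f(x)|\le\lambda|f(gx)|$ for every $g\in G$, and averaging over $G$ gives
\begin{equation*}
F_f(x)=\int_G|f(gx)|\,dg\ge\tfrac1\lambda|f(x)|,\qquad\text{equivalently}\qquad |f(x)|\le\lambda\,F_f(x),
\end{equation*}
for a.e. $x\in M\setminus B(x_o,R)$. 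Thus $K$ is bounded in $H^{1,p}(M)$ and dominated at infinity by the relatively compact set $K_0$, in the sense of the definition preceding Theorem~\ref{thm:extord}, with constant $b=\lambda$. Theorem~\ref{thm:extord} then yields that $K$ is relatively compact in $L^q(M)$.

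The step I expect to require the most care is the excision: one must pass from $B$ to its $G$-saturation $GB$ before the orbitwise comparison becomes usable, since \eqref{eq:qua-s} only delivers $|f(x)|\le\lambda\,F_f(x)$ once every point of the orbit $Gx$ is known to avoid $B$. The remaining technical point is the Sobolev estimate $\||f|\|_{H^{1,p}}\le\|f\|_{H^{1,p}}$, which holds for real- or complex-valued $f$ by the standard chain-rule (Kato) inequality for the modulus.
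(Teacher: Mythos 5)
Your proof is correct and follows essentially the same route as the paper: symmetrize $|f|$ by the averaging operator $T_G$ of Lemma~\ref{lem:av}, obtain compactness of the resulting $G$-invariant family from Theorem~\ref{thm:groupsymm}, and conclude via Theorem~\ref{thm:extord}, using \eqref{eq:qua-s} at the points $gx$ (legitimate once the whole orbit $Gx$ avoids $B$, i.e.\ once $x\notin G(B)$) to get $|f(x)|\le\lambda\, T_G(|f|)(x)$. The only difference is organizational: the paper first multiplies by a $G$-invariant cutoff $1-\chi$ vanishing on $G(B)$, so that the domination $|(1-\chi)f|\le\lambda T_G[(1-\chi)|f|]$ holds everywhere, and then applies Theorem~\ref{thm:extord} twice (once to $K'=\{(1-\chi)|f|\}_{f\in K}$ dominated by $T_G(K')$, and once to $K$ dominated by $K'$); you instead dominate $K$ by $\{T_G(|f|)\}_{f\in K}$ directly outside a ball containing $G(B)$, which suffices because the definition of domination at infinity only requires the bound outside some ball, so a single application of Theorem~\ref{thm:extord} does the job. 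Both arguments rest on the same auxiliary fact, which you at least make explicit, that $f\mapsto|f|$ is bounded on $H^{1,p}(M)$ (Kato's inequality), so your streamlining is a legitimate and slightly cleaner version of the paper's proof.
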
	
\begin{proof} Let $\chi\in C_0^\infty(M)$ be a $G$-invariant function, $0\le\chi\le 1$, that equals $1$ on $G(B)$, and let 
	\[
	K'=\{(1-\chi)|f|\}_{f\in K}.
	\]
Obviously, $K'$ is bounded in $H^{1,p}(M)$, while 
the $T_G(K')$ is a subset of $H_G^{1,p}(M)$ and is bounded there by Lemma~\ref{lem:av}. Then, by Theorem~\ref{thm:groupsymm}  it is relatively compact in  $L^q(M)$.
By  \eqref{eq:qua-s}, $|f(x)|\le \lambda |f(\eta x)|$ for all $\eta\in G$ and  $x\notin G(B)$. Then $|(1-\chi)f|\le \lambda T_G[(1-\chi)|f|]$ at every $x\in M$. Since $T_G(K')$ is compact, by Theorem~\ref{thm:extord} we have that $K'$ is compact. Applying Theorem~\ref{thm:extord} once again we conclude that $K$  compact. 
\end{proof}

\section{Some variational problems}
In this section we give two elementary existence results for critical points in variational problems.

Let $\Delta_M^p$ denote the Laplace-Beltrami $p$-Laplacian on $M$, given as the Gateux derivative of $\int_M|\nabla u|^pd\mathrm{vol}$. 
\begin{theorem}
\label{thm:varG}
 Let $M$ be a connected, complete, non-compact manifold satisfying (M1) and (M2), let $p\in(1,N)$, $q\in(p,p^*)$, and let $G$ be a compact, connected and coercive group of isometries on $M$. 
	There exists a weak solution $u\in H_G^{1,p}(M)$ to the equation
	\begin{align}\label{eq:eq2}
		-\Delta_M^pu+
		|u|^{p-2}u
		=|u|^{q-2}u,\,\text{ on } M,
	\end{align}
	which is a scalar multiple of a minimum point for 
	\begin{equation}\label{eq:inf2}
	\kappa\eqdef	\inf_{f\in H^{1,p}_G(M):\int_{M}|u|^qd\mathrm{vol}=1}\int_{M} \left(\left|\nabla u\right|^p+|u|^p\right)d \mathrm{vol}.
	\end{equation}
\end{theorem}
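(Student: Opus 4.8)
The plan is to produce $u$ by the direct method of the calculus of variations as a constrained minimizer for $\kappa$, to use the compact embedding of Theorem~\ref{thm:groupsymm} to guarantee that the infimum is attained, and then to turn the minimizer into a genuine weak solution by combining a Lagrange multiplier computation with an averaging (symmetric criticality) argument built on Lemma~\ref{lem:av}, finishing with a scaling normalization that removes the multiplier.

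First I would set $J(u)\eqdef\int_M(|\nabla u|^p+|u|^p)\,d\mathrm{vol}=\|u\|_{H^{1,p}}^p$ and $I(u)\eqdef\int_M|u|^q\,d\mathrm{vol}$, and minimize $J$ over the constraint set $\mathcal S\eqdef\{u\in H^{1,p}_G(M):I(u)=1\}$. The continuous Sobolev embedding $H^{1,p}(M)\hookrightarrow L^q(M)$ gives $1=\|u\|_q^p\le C\,J(u)$ for $u\in\mathcal S$, so that $\kappa\ge C^{-1}>0$. A minimizing sequence $(u_k)\subset\mathcal S$ satisfies $J(u_k)\to\kappa$ and is therefore bounded in $H^{1,p}(M)$; by reflexivity a renamed subsequence has $u_k\rightharpoonup u$ in $H^{1,p}(M)$, and since $H^{1,p}_G(M)$ is a closed, hence weakly closed, subspace we get $u\in H^{1,p}_G(M)$.

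The decisive step is attainment, where I would invoke Theorem~\ref{thm:groupsymm}: coercivity of $G$ makes $H^{1,p}_G(M)\hookrightarrow L^q(M)$ compact, so $u_k\to u$ strongly in $L^q(M)$, whence $I(u)=\lim_k I(u_k)=1$ and $u\in\mathcal S$ is admissible. As $J$ is convex and strongly continuous it is weakly lower semicontinuous, so $J(u)\le\liminf_k J(u_k)=\kappa$, and therefore $J(u)=\kappa$: the minimizer exists. Both $J$ and $I$ are $C^1$ on $H^{1,p}(M)$, with $\langle J'(u),v\rangle=p\int_M(|\nabla u|^{p-2}\nabla u\cdot\nabla v+|u|^{p-2}uv)\,d\mathrm{vol}$ and $\langle I'(u),v\rangle=q\int_M|u|^{q-2}uv\,d\mathrm{vol}$. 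The Lagrange multiplier rule on $\mathcal S$ \emph{inside the subspace} $H^{1,p}_G(M)$ produces a multiplier, which after normalizing the constants $p,q$ I denote $\Lambda$, so that the residual functional $\xi\eqdef\tfrac1p J'(u)-\tfrac{\Lambda}{q}I'(u)$ vanishes on all of $H^{1,p}_G(M)$; testing with $v=u$ yields $\Lambda=J(u)=\kappa>0$.

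At this stage $\xi$ is known to vanish only against symmetric test functions, and I expect this passage to a genuine weak solution on all of $M$ to be the main obstacle. It is resolved by the averaging projection $T_G$ of Lemma~\ref{lem:av}: since $J$ and $I$ are $G$-invariant and $u$ is $G$-fixed, $\xi$ is a $G$-invariant element of the dual, so that
\[
\langle\xi,v\rangle=\langle\xi,T_Gv\rangle=0\qquad\text{for every }v\in H^{1,p}(M),
\]
because $T_Gv\in H^{1,p}_G(M)$. Thus $u$ weakly solves $-\Delta_M^pu+|u|^{p-2}u=\kappa\,|u|^{q-2}u$ on $M$. Finally I would remove the multiplier by scaling: for $w\eqdef t\,u$ with $t>0$ the homogeneities $-\Delta_M^p(tu)=t^{p-1}(-\Delta_M^pu)$ and $|tu|^{q-2}(tu)=t^{q-1}|u|^{q-2}u$ reduce the equation for $w$ to the single condition $t^{q-p}=\kappa$; choosing $t=\kappa^{1/(q-p)}$ yields a weak solution $w\in H^{1,p}_G(M)$ of $-\Delta_M^pw+|w|^{p-2}w=|w|^{q-2}w$ that is a scalar multiple of the minimizer $u$. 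The analytic heart of the argument, compactness, is already supplied by Theorem~\ref{thm:groupsymm}, and the symmetric-criticality step via Lemma~\ref{lem:av} is what converts the symmetric critical point into an honest solution.
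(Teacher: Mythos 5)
Your proposal is correct and follows essentially the same route as the paper: the direct method on the constraint set, positivity of the infimum from the continuous embedding, attainment via the compact embedding of Theorem~\ref{thm:groupsymm} combined with weak lower semicontinuity, then a Lagrange multiplier removed by exploiting the different homogeneities of the two sides of the equation (your computation $t=\kappa^{1/(q-p)}$ is right, as is the identification $\Lambda=\kappa$ by testing with $u$).

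The one genuine difference is that you make explicit a step the paper passes over in silence: a constrained minimizer in the subspace $H^{1,p}_G(M)$ a priori yields an Euler--Lagrange identity only against $G$-invariant test functions, and one must argue that this suffices to have a weak solution on all of $M$. The paper simply asserts the Euler--Lagrange equation, implicitly invoking symmetric criticality; you supply the proof, using the averaging projection $T_G$ of Lemma~\ref{lem:av} together with the $G$-invariance of the residual functional $\xi$ (which holds because $u$ is $G$-fixed and $G$ acts by isometries, so $\langle\xi, v\circ g\rangle=\langle\xi,v\rangle$, whence $\langle\xi,v\rangle=\langle\xi,T_Gv\rangle=0$ for all $v\in H^{1,p}(M)$). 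This is the standard Palais-type argument adapted to this setting, and it is a worthwhile completion of the paper's sketch; the only detail you should not leave tacit is the interchange $\langle\xi,\int_G v\circ g\,dg\rangle=\int_G\langle\xi,v\circ g\rangle\,dg$, which is justified since $g\mapsto v\circ g$ is Bochner integrable in $H^{1,p}(M)$ and $\xi$ is a bounded linear functional.
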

\begin{proof}
	Since $H_G^{1,p}(M)$ is embedded into $L^q(M)$, the infimum is positive.  Let $(u_k)$ in $H_G^{1,p}(M)$ be a minimizing sequence, it has a renamed subsequence weakly convergent to some $u_0\in H_G^{1,p}(M)$. By weak semicontinuity of the norm,  $\int_M \left(\left|
	\nabla u_0
	\right|^{p}+|u_0|^p\right)d\mathrm{vol}\le\kappa'$. By Theorem~\ref{thm:groupsymm}  the embedding 
	$H_G^{1,p}(M)\hookrightarrow L^q(M)$ is compact, which implies that $\int_{M}|u_0|^qd\mathrm{vol}=1$. However, by definition of $\kappa$, $\int_M \left(\left|
	\nabla u_0
	\right|^{p}+|u_0|^p\right)d\mathrm{vol}$ cannot be less than $\kappa$ and thus $u_0$ is a minimizer in \eqref{eq:inf1}.
	
The Euler-Lagrange equation for a point of minimum in \eqref{eq:inf2} has the left and a right hand side of \eqref{eq:eq2} equated up to a scalar multiple. Since the left and the right hand sides have different homogeneity degrees, substituting $u$ with $\lambda u$ one can choose $\lambda>0$ to make the multiple equal $1$. 
\end{proof}
\begin{theorem} \label{ex:1}
Let  $M$ be a connected, complete, non-compact manifold satisfying (M1) and (M2) with a pole at $x_o\in M$, i.e. the map $\exp_{x_o}:T_{x_o}M\rightarrow M$ is a diffeomorphism and $\varphi$ from $M$ to $\R$ be given by  $\varphi(x)=d_M(x_o,x)$.  
Let $\Psi(r)=\mathrm{vol}(\partial B(x_o,r))$ (cf \eqref{eq:mualpha}). 
There exists a weak solution $f\in H^{1,p}([0,\infty),\Psi(r)dr)$ of the equation
\begin{align}\label{eq:eq}
	-\frac{1}{\Psi(r)}\frac{d}{dr}\left(\Psi(r)\left|\frac{df}{dr}\right|^{p-2}\frac{df}{dr}\right)=|f|^{q-2}f,\; r>0,
\end{align}
which is a scalar multiple of a minimum point in 
\begin{equation}\label{eq:inf}
\kappa_\varphi\eqdef	\inf_{f,f'\in L^p((0,\infty),\Psi(r) dr):\int_X|f(r)|^q\Psi(r) dr=1}\int_0^\infty 
\left|\frac{df}{dr}\right|^{p}\Psi(r)dr.
\end{equation}
\end{theorem}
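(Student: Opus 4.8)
The plan is to transfer the one-dimensional weighted problem \eqref{eq:inf} to the radial minimization problem on $M$, apply the compactness machinery of Section~5 there, and then pull the resulting Euler--Lagrange identity back to the half-line. Since $x_o$ is a pole, $\varphi=d_M(x_o,\cdot)$ is smooth away from $x_o$, $1$-Lipschitz, and satisfies the eikonal identity $\mathcal J_\varphi=|\nabla_M\varphi|=1$ a.e.; its level sets are the geodesic spheres $\varphi^{-1}(r)=\partial B(x_o,r)$, so the coarea weight \eqref{eq:mualpha} is precisely $\Psi(r)=\mathrm{vol}(\partial B(x_o,r))$, the singularity at $x_o$ being harmless by Remark~\ref{rem:excision}. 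For $u=f\circ\varphi$ the chain rule gives $|\nabla_M u|=|f'(\varphi)|$ a.e., so \eqref{eq:coarea} together with \eqref{eq:Lq} yields the two isometric identities
\begin{equation}
\int_M|\nabla_M(f\circ\varphi)|^p\,d\mathrm{vol}_M=\int_0^\infty|f'(r)|^p\Psi(r)\,dr,\qquad\int_M|f\circ\varphi|^q\,d\mathrm{vol}_M=\int_0^\infty|f(r)|^q\Psi(r)\,dr.
\end{equation}
Hence $\kappa_\varphi=\inf\{\int_M|\nabla_M u|^p\,d\mathrm{vol}_M:\ u\in H^{1,p}_\varphi(M),\ \int_M|u|^q\,d\mathrm{vol}_M=1\}$, and a minimizer of the latter corresponds, through the bijection $f\mapsto f\circ\varphi$ onto the closed subspace $H^{1,p}_\varphi(M)$ (Section~5), to a minimizer of \eqref{eq:inf}.

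Next I would run the direct method on $M$. Let $u_k=f_k\circ\varphi$ be a minimizing sequence, so $\int_M|u_k|^q\,d\mathrm{vol}_M=1$ and $\int_M|\nabla_M u_k|^p\,d\mathrm{vol}_M\to\kappa_\varphi$. \emph{Once} the sequence is known to be bounded in $H^{1,p}(M)$, reflexivity gives a renamed subsequence with $u_k\rightharpoonup u_0$ in $H^{1,p}(M)$. The distance-from-the-pole map is level-coercive with connected, uniformly thick level sets (cf. the pole example of Section~5), so by Theorem~\ref{thm:levels2} the embedding $H^{1,p}_\varphi(M)\hookrightarrow L^q(M)$ is compact and $u_k\to u_0$ in $L^q(M)$. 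Consequently $\int_M|u_0|^q\,d\mathrm{vol}_M=1$, while weak lower semicontinuity of the gradient norm gives $\int_M|\nabla_M u_0|^p\,d\mathrm{vol}_M\le\kappa_\varphi$; thus $u_0$ is a minimizer, and closedness of $H^{1,p}_\varphi(M)$ shows $u_0=f\circ\varphi$ with $f$ feasible in \eqref{eq:inf}.

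Finally I would derive the equation by the Lagrange multiplier rule: the constrained minimizer satisfies, for a multiplier $\mu>0$ and all radial test functions $\phi=\psi\circ\varphi$,
\begin{equation}
\int_M|\nabla_M u_0|^{p-2}\nabla_M u_0\cdot\nabla_M\phi\,d\mathrm{vol}_M=\mu\int_M|u_0|^{q-2}u_0\,\phi\,d\mathrm{vol}_M.
\end{equation}
Using $\nabla_M u_0=f'(\varphi)\nabla_M\varphi$, $\nabla_M\phi=\psi'(\varphi)\nabla_M\varphi$ and $|\nabla_M\varphi|=1$, the coarea formula turns this into the weak form
\begin{equation}
\int_0^\infty|f'|^{p-2}f'\,\psi'\,\Psi\,dr=\mu\int_0^\infty|f|^{q-2}f\,\psi\,\Psi\,dr,
\end{equation}
which is exactly the distributional formulation of $-\tfrac1{\Psi}(\Psi|f'|^{p-2}f')'=\mu|f|^{q-2}f$. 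Because the operator $f\mapsto-\tfrac1{\Psi}(\Psi|f'|^{p-2}f')'$ is $(p-1)$-homogeneous while $f\mapsto|f|^{q-2}f$ is $(q-1)$-homogeneous and $q>p$, replacing $f$ by $tf$ with $t=\mu^{1/(q-p)}$ normalizes the multiplier to $1$, giving \eqref{eq:eq} and exhibiting the solution as a scalar multiple of the minimizer.

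The main obstacle is the boundedness of the minimizing sequence in the \emph{full} norm of $H^{1,p}(M)$: the functional \eqref{eq:inf} controls only the gradient, so a priori nothing bounds $\int_M|u_k|^p\,d\mathrm{vol}_M$, and before the compact embedding (which is stated for the full space) can be invoked one must exclude the mass of $u_k$ escaping to infinity. Equivalently, one must establish $\kappa_\varphi>0$ together with a coercivity estimate $\|u\|_{H^{1,p}}\le C(\|\nabla_M u\|_p+\|u\|_q)$ valid on radial functions. This is where the volume growth encoded in $\Psi$ is decisive: the required Poincar\'e-type control holds when $\Psi$ grows fast enough (for instance at the exponential rate of a negatively curved pole), whereas under Euclidean-type scaling one can spread a fixed $L^q$-mass to infinity and drive the gradient energy to zero. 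Securing this coercivity is the crux; the remaining steps are the standard direct method and a routine computation of the first variation.
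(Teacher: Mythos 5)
Your implemented steps coincide with the paper's: the coarea transfer to the closed subspace $H^{1,p}_\varphi(M)$ (with $\mathcal J_\varphi=|\nabla_M\varphi|=1$ for the distance function, the pole singularity handled by Remark~\ref{rem:excision}), compactness of $H^{1,p}_\varphi(M)\hookrightarrow L^q(M)$ from the pole example of Section~5, the direct method, and the Euler--Lagrange equation with the multiplier normalized via the rescaling $t=\mu^{1/(q-p)}$ --- this is exactly the argument of Theorem~\ref{thm:varG}, which the paper's proof invokes verbatim. (Whether you route the compactness through Theorem~\ref{thm:levels2} or, as the paper's example does, through the $\delta_r$-criterion of Theorem~\ref{thm:levels1}, is immaterial.) The sole point of divergence is precisely the step you flag as ``the crux'' and leave open: boundedness of the minimizing sequence in the full $H^{1,p}(M)$ norm when the functional controls only the gradient.

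Here is what the paper does at that point, and why your worry is not a defect of your proof but of the statement. The paper's proof opens by declaring \eqref{eq:inf} ``equivalently written'' as \eqref{eq:inf1}, i.e.\ it minimizes the \emph{full} norm $\int_M(|\nabla u|^p+|u|^p)\,d\mathrm{vol}$; with that functional, minimizing sequences are bounded in $H^{1,p}(M)$ by fiat and $\kappa_\varphi>0$ follows from the embedding, so your obstacle never arises. But your scaling remark shows this equivalence is false for \eqref{eq:inf} as printed: on $M=\R^m$ (a manifold with a pole satisfying (M1)--(M2), with $\Psi(r)=\omega_{m-1}r^{m-1}$), the rescaling $u_\lambda(x)=\lambda^{m/q}u(\lambda x)$ preserves $\|u\|_q=1$ while $\|\nabla u_\lambda\|_p^p=\lambda^{pm/q+p-m}\|\nabla u\|_p^p\to0$ as $\lambda\to0$, since $pm/q+p-m>0$ for $q<p^*$; hence the gradient-only infimum is $0$ and is not attained (a constant $f$ with $\|f\|_{L^p(\Psi dr)}<\infty$ must vanish). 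Since the theorem imposes no growth condition on $\Psi$, the Poincar\'e-type inequality you hypothesize is simply unavailable, and no proof of the literal statement exists. The consistent reading --- the one the paper actually proves --- adds $\int_0^\infty|f|^p\Psi\,dr$ to the minimized functional in \eqref{eq:inf} and the term $|f|^{p-2}f$ to the left side of \eqref{eq:eq}, matching \eqref{eq:eq2}. With that emendation your argument closes and is the paper's proof; you would have done better still to convert your scaling observation into this explicit counterexample and propose the repair, rather than leaving the coercivity as an unresolved hypothesis.
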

\begin{proof}
	Relation \eqref{eq:inf} can be equivalently written as 
	\begin{equation}\label{eq:inf1}
\kappa_\varphi=	\inf_{u\in H_\varphi^{1,p}(M):\int_{M}|u|^qd\mathrm{vol}=1}\int_M \left(\left|
	\nabla u
	\right|^{p}+|u|^p\right)d\mathrm{vol}.
	\end{equation}
The argument for existence of a minimum is now identical to the argument in Theorem~\ref{thm:varG}, once we observe that $H_\varphi^{1,p}(M)$ is compactly embedded into $L^q(M)$ according to Example~\ref{ex:1}.
\end{proof}


\end{document}